\documentclass[12pt]{amsart}
\usepackage[latin9]{inputenc}
\usepackage{amstext}
\usepackage{amsthm}
\usepackage{amssymb}
\usepackage[unicode=true,
 bookmarks=false,
 breaklinks=false,pdfborder={0 0 1},backref=section,colorlinks=false]{hyperref}
\usepackage{verbatim}
\usepackage{amstext}
\usepackage{amsthm}
\usepackage{eurosym}
\usepackage{latexsym}
\usepackage{amsthm}
\usepackage{amsfonts}
\usepackage{ulem}
\usepackage{xcolor}

\theoremstyle{plain}
\newtheorem{theor}{Theorem}
\newtheorem{exttheorem}{Theorem}

\newtheorem{theorem}{Theorem}

\newtheorem{lemma}[theorem]{Lemma}
\newtheorem{cor}[theorem]{Corollary}
\newtheorem{proposition}[theorem]{Proposition}

\newtheorem{probl}[theorem]{Problem}
\theoremstyle{definition}

\newtheorem*{acknowledgements*}{Acknowledgements}
\theoremstyle{remark}
\newtheorem{remark}[theorem]{Remark}

\numberwithin{theorem}{section}   
\subjclass[2010]{Primary 41A46; Secondary 46E30, 46B09}
\keywords{Kolmogorov width, independent functions, symmetric space, disjoint functions,  lower and upper estimates, $L_{p,q}$, Orlicz space}
\makeatother

\normalbaselines

\renewcommand{\leq}{\leqslant}
\renewcommand{\le}{\leqslant}
\renewcommand{\geq}{\geqslant}
\renewcommand{\ge}{\geqslant}

\newcommand\bbN{\mathbb{N}}
\newcommand\bbR{\mathbb{R}}
\newcommand\bbE{\mathbb{E}}
\newcommand\bbK{\mathbb{K}}

\newcommand\avg{\mathrm{avg}}
\newcommand\LFR{\mathrm{LFR}}
\newcommand\IR{(\mathrm{IR})}
\newcommand\eps{\varepsilon}

\DeclareMathOperator\sign{sign}
\DeclareMathOperator\supp{supp}
\DeclareMathOperator\conv{conv}

\begin{document}
\title{Rigidity of sets of independent functions in symmetric spaces}
\author{Sergey V. Astashkin}
\address{Astashkin: Department of Mathematics, Samara National Research University, Moskovskoye shosse 34, 443086, Samara, Russia; Lomonosov Moscow State University, Moscow, Russia; Department of Mathematics, Bahcesehir University, 34353, Istanbul, Turkey.}
\email{astash56@mail.ru}

\author{Yu. V. Malykhin}
\address{Malykhin: Steklov Mathematical Institute, 8 Gubkina st., Moscow,
119333, Russian Federation}
\email{malykhin-yuri@yandex.ru}

\thanks{The work of the first named author was performed at Lomonosov Moscow
State University and was supported by the Russian Science
Foundation, project no. 23-71-30001.}

\date{\today }

\maketitle

\begin{abstract}
We say that a symmetric function space $X$ has the $\IR$ property whenever all
    sets of $N$ independent mean zero functions $f_1,\ldots,f_N\in X$,
    $\|f_k\|_X\ge 1$, are poorly approximated by any linear combinations of arbitrary $n$
    functions, if $n$ is sufficienly smaller that $N$; namely, for some $\gamma=\gamma(X)>0$
    we have $d_n(\{f_1,\ldots,f_N\},X)\ge \gamma$, $n\le \gamma N$, where 
    $d_n(K,X)$ is the Kolmogorov $n$-width of the set $K\subset X$.
    The spaces $X=L_p$ satisfy this property if and only if
    $1\le p\le2$ or $p=\infty$.
    The goal of this paper is to move from $L_p$ scale to a larger class of
    symmetric spaces.  We obtain rather broad conditions, under which
    such a space $X$ has the $\IR$ property and prove precise statements for
    particular scales of Lorentz $L_{p,q}$ spaces and Orlicz spaces.
\end{abstract}

\section{Introduction}

Sequences of independent functions are studied both in probability theory and in
functional analysis. They play an important role in areas related to the
geometry of function spaces,  and in particular to the study of their lattice
and symmetric structures. This was demonstrated in full through the profound
results obtained by W.~B.~Johnson, B.~Maurey, G.~Schechtman, and L.~Tzafriri in
their memoir \cite{JMShT}. Additionally, let us mention some surveys, where
close problems are investigated. In~\cite{Gap66}, the function-theoretic
properties of sequences of both independent and weakly dependent (lacunary)
systems of functions are considered. In~\cite{AS}, the norms of sums 
of independent functions in symmetric spaces are investigated, while
in~\cite{A-24} the properties of subspaces generated by those are studied due to
the geometrical structure of these spaces. For the detailed information on the
simplest and alongside with that very important sequence of independent
functions, the Rademacher system, see the book~\cite{Abook}.

In this paper, we investigate the approximation properties of sets of
independent functions in function spaces, namely, the possibility of
approximating sets of $N$ such functions by elements from
linear subspaces of dimension essentially smaller than $N$. Let us give formal
definitions.

Let $X$ be a linear normed space. The Kolmogorov $n$-width
of a set $K\subset X$ is defined as the best approximation of $K$
by linear subspaces of $X$ of dimension at most $n$:
\begin{equation}
\label{base_0}
d_n(K,X) := \inf_{\dim V\le n} \sup_{x\in K}\rho(x, V)_X,
\quad\mbox{where }\rho(x,V)_X := \inf_{v\in V} \|x-v\|_X.
\end{equation}

Some lower bounds for the (Kolmogorov)
widths are based on the following simple fact: if elements $\varphi_1,\ldots,\varphi_N$~form an orthonormal system in an Euclidean space $E$, then for any $n$-dimensional subspace $V\subset E$ we have
\begin{equation}
\label{base_l2}
\rho(\varphi_1,V)_E^2 + \ldots + \rho(\varphi_N,V)_E^2 \ge N-n.
\end{equation}
This yields at once the lower bound in the well-known equality
$$
d_n(\{\varphi_1,\ldots,\varphi_N\},E) = (1-n/N)^{1/2}.$$
Thus, the orthonormal
system is poorly approximated by low-dimensional subspaces (say, if $n\le N/2$); we will informally refer to this property as to the rigidity of such a system.

Recently, the rigidity of sets in other spaces was studied in papers~\cite{M22,M24,MR25}. In particular, it was proved there that the classical Walsh system in the Paley numbering is not rigid in $L_p$ for $p<2$: the first $N$ functions of this system can be approximated with an error
$O(N^{-\delta_p})$ by a subspace of dimension $O(N^{1-\delta_p})$, where $\delta_p>0$.





To measure the rigidity of sets of independent functions in a space we introduce the following notion. Let $X$ be a symmetric space (an s.s.) on $[0,1]$ (see Section~\ref{sec_prelim} for all definitions). 
We will say that $X$ has the property (IR) ($X\in\IR)$ if
there exists $\gamma>0$ such that for any $N\in\mathbb N$ and arbitrary independent functions $f_1,\ldots,f_N\in X$ with $\int_0^1 f_k(x)\,dx=0$ and $\|f_k\|_X \ge 1$, $k=1,\ldots,N$, we have
$$
d_{\gamma N}(\{f_1,\ldots,f_N\},X) \ge \gamma.
$$
Informally, the fact $X\in\IR$ means that finite sets of mean zero independent functions are rigid in $X$.

It is easy to show that the negation of (IR) is equivalent to the existence of a sequence
$(f_k)_{k=1}^\infty\subset X$ of mean zero independent functions, $\|f_k\|_X\ge 1$,
which is well approximated by low-dimensional subspaces, i.e., for any
$\eps>0$ it holds:
$$
\lim_{N\to\infty} d_{\eps N}(\{f_1,\ldots,f_N\}, X) = 0.
$$

In~\cite{M24}, the rigidity of sets of mean zero independent functions in the space
$L_1$ and its absence in $L_p$ for $2<p<\infty$ were established; see~\cite[Corollary 3.1 and Proposition 4.2]{M24}. Moreover, as was proved in~\cite[\S1.2]{MR25}, such sets are rigid in $L_p$ for $1<p\le 2$. For the special case $p=\infty$, see~Proposition~\ref{prop_linf} below.
As a result, for $L_p$-spaces we have the following criterion.

\begin{exttheorem}
\label{extth}
The space $L_p[0,1]$, $1\le p\le\infty$, has the property (IR) if and
only if $1\le p\le 2$ or $p=\infty$.
\end{exttheorem}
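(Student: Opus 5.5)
The criterion splits into four regimes: $p=1$, $1<p\le 2$, $2<p<\infty$ and $p=\infty$. The first three are already settled in the literature cited just before the statement, so the plan is to quote those results and supply a separate argument for $p=\infty$, which is the content of Proposition~\ref{prop_linf}.

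For $p=1$, rigidity is \cite[Corollary 3.1]{M24}. For $1<p\le 2$ it is \cite[\S1.2]{MR25}. If one wants a self-contained argument in this range, I would go through Rademacher-type averaging. For independent mean zero $f_k$ with $\|f_k\|_p\ge1$ and $p\le2$, the Rosenthal/Khintchine type bounds (in their symmetrized form) show that the span behaves like $\ell_p$ or $\ell_2$ sums from below. A linear subspace of dimension $n\le\gamma N$ can then be handled by the averaging argument \eqref{base_l2}: project onto $V$ and compare traces, using that an $n$-dimensional subspace of $L_p$ is $\sqrt n$-complemented. This step is the delicate part, and I expect it to be the reason the proof is in \cite{MR25} rather than trivial.

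For $2<p<\infty$, the failure of (IR) is \cite[Proposition 4.2]{M24}. The idea is to take $f_k$ equal to $\pm a$ on a set of small measure $\delta$ and $0$ elsewhere, normalized so that $\|f_k\|_p=1$, i.e. $a=\delta^{-1/p}$. Their $L_2$ norms are then $\delta^{1/2-1/p}\to0$. One approximates the whole family $\{f_1,\ldots,f_N\}$ in $L_p$ by a low-dimensional subspace using the fact that $L_p$-widths of such sparse, nearly disjoint families are small. This is in the spirit of Gluskin/Kashin width estimates for the octahedron in $\ell_q$, $q>2$, where $d_n(B_1^N,\ell_q^N)\to0$ for $n=\eps N$ once the functions are spread out.

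For $p=\infty$, the plan is a direct rigidity argument. After rescaling we may assume $\|f_k\|_\infty\ge1$ and mean zero. By independence, on each coordinate we can choose levels: for any signs, the set where $f_k\approx \pm$(near-extreme values) simultaneously for all $k$ has positive measure, since the joint distribution is a product measure. Hence the map $g\mapsto (g \text{ restricted to suitable points})$ realizes the family inside an $\ell_\infty$-type structure: for every $\theta\in\{-1,1\}^N$ there is a point $t$ with $f_k(t)$ close to values $c_k^{\theta_k}$, where $c_k^+\ge0\ge c_k^-$ and $c_k^+-c_k^-\ge1$ (this uses mean zero together with $\|f_k\|_\infty\ge1$). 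Evaluating at these $2^N$ points gives a norm-one restriction operator from $L_\infty$ into $\ell_\infty^{2^N}$ that maps $f_k$ to vectors whose differences contain the coordinate functionals of the cube. The problem then reduces to the known lower bound $d_n(\{e_k\},\ell_\infty^N)$-type rigidity for the Rademacher-cube vectors in $\ell_\infty(\{-1,1\}^N)$. That bound follows from comparison with $L_2$ on the cube via \eqref{base_l2}, since $\|\cdot\|_{\ell_\infty}\ge\|\cdot\|_{L_2(\text{cube})}$ and the centred vectors are orthogonal with $L_2$ norm at least $1/2$. Handling the approximate values and the near-extremal choice of $c_k^\pm$ is routine, and this yields $d_{N/2}\ge c$.
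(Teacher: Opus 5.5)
For $p=1$, $1<p\le 2$ and $2<p<\infty$ you rely on the same sources the paper cites, so the real comparison is at $p=\infty$, where your argument is correct in outline but takes a different route.

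\textbf{The paper's route.} Proposition~\ref{prop_linf} looks at the vectors $I^\tau\vec f\in\mathbb R^N$ obtained by averaging over the cells $M^\tau$. These are the vertices of a box containing $[0,1-\delta]^N$, and their approximants lie in the column space of the coefficient matrix. The cube identity \eqref{width_cube} then gives the sharp bound $d_{N-1}\ge 1/2$.

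\textbf{Your route.} You take the transposed view of the same $N\times 2^N$ array. Each $f_k$ becomes a vector on $\{-1,1\}^N$ that is affine in $r_k(\theta)=\theta_k$, and you compare $\ell_\infty$ with $L_2$ of the uniform measure on the cube, where the $r_k$ are orthonormal. Two points need tightening:
\begin{itemize}
\item Point evaluation is not defined on $L_\infty$. Replace it, as the paper does, by averaging over $M^\theta=\bigcap_k\{|f_k-c_k^{\theta_k}|<\delta\}$, with $c_k^\pm$ the essential supremum and infimum. These cells have positive measure by independence, and the averages of $f_k$ depend only on $\theta_k$. So the image of $f_k$ is exactly $\alpha_k\mathbf 1+b_kr_k$ with $b_k\ge\frac12-\delta$.
\item Your phrase about ``differences'' does not isolate $r_k$. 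Remove $\mathbf 1$ instead by the orthogonal projection onto $\mathbf 1^\perp$ in $L_2(\text{cube})$, which has norm one and does not increase the dimension of the subspace.
\end{itemize}

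\textbf{What each buys.} In this form your argument is an instance of Lemma~\ref{lem_width} in $L_\infty$. The conjugate system is $b_k^{-1}\psi_k$ with $\psi_k=2^{-N}\sum_\theta\theta_k\chi_{M^\theta}/m(M^\theta)$. Condition \eqref{dual_cond} then comes from
$$\Big\|\sum_k a_k\psi_k\Big\|_{L_1}\le\Big\|\sum_k a_kr_k\Big\|_{L_1(\text{cube})}\le\|a\|_{\ell_2}.$$
This gives the averaged estimate $d_n^{\avg}(\{f_1,\dots,f_N\},L_\infty)_2\ge\frac12(1-n/N)^{1/2}$, parallel to Theorem~\ref{main1} and enough for (IR). The paper's argument controls only the maximal error, but gives the constant $1/2$ for every $n\le N-1$, a range where your bound degenerates.

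\textbf{The optional sketch for $1<p\le 2$.} As written it would not work. Passing through a $\sqrt n$-bounded projection loses a factor $\sqrt n$, and lower Rosenthal/Khintchine bounds for the span of the $f_k$ are not what is needed. The argument that works (Theorem~\ref{main1}) builds a conjugate system in $L_{p'}$ by conditional expectations, proves an \emph{upper} $\ell_2$-estimate for it, and then applies Lemma~\ref{lem_width}.
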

Below, in Corollary~\ref{cor_lq}, we will give more precise estimates related to this special case.

The main goal of this paper is to move from $L_p$-spaces to a larger class of
symmetric function spaces by recovering the relationship between the
possibility of approximating subsets of independent functions in an s.s. $X$ by
low-dimension subspaces and the geometrical structure of $X$. It is worth
mentioning that in the setting of symmetric {\it sequence} spaces similar
estimates for various types of widths have been studied recently rather
intensively; see, for instance, \cite{DMM1,DMM2,DMM3,HM}). Note also that there
is a close relationship between the study of the  widths of finite systems of
functions and those of sets in $\bbR^N$ (see~\cite{M24,MR25}).

As we will show below, much in this more general situation is determined by lattice properties of the given space and especially by the structure of subspaces generated by disjoint functions. Our approach to the problem is based primarily on
the use of estimates related to the comparison of norms of sums of independent functions and their disjoint copies in s.s.'s (see \cite{JSh1,Brav,AS,ASZ-14}). We use also classical estimates for the Kolmogorov width of the Euclidean ball (see~\S\ref{Prel_width}) and some  methods developed recently in the papers~\cite{M24,MR25,Mdiss}. As a result, we obtain rather broad conditions, under which an s.s. $X$ has (respectively, does not have) the (IR) property. Based on these, we prove more precise statements for particular scales of s.s.'s, namely, for Lorentz $L_{p,q}$-spaces and Orlicz spaces.

Let us formulate several main results here (more precise estimates for the corresponding widths are contained below in Section~\ref{sec_main}).

\begin{theor}
    \label{th_intro_rigid}
    Let an s.s. $X$ on $[0,1]$ satisfy a lower 2-estimate, $X\supset L_2$, 
and let the associated space $X'$ have the Kruglov property. Then $X\in\IR$.
   \end{theor}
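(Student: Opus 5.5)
We will deduce the statement from a lower bound for averages of distances, in the spirit of \eqref{base_l2}. The aim is to find $c>0$, depending only on $X$, such that for independent mean zero $f_1,\ldots,f_N$ with $\|f_k\|_X\ge1$ and any subspace $V\subset X$ of dimension $n\le\gamma N$, we have $\rho(f_k,V)_X\ge c$ for at least one $k$. The natural tool is duality: $\rho(f,V)_X=\sup\{\langle f,g\rangle: g\in V^\perp\subset X', \|g\|_{X'}\le1\}$. So it suffices to produce functionals $g_k\in X'$ that annihilate $V$, have $\|g_k\|_{X'}\lesssim 1$, and satisfy $\langle f_k,g_k\rangle\gtrsim 1$ for many $k$.

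\textbf{Step 1: reduction to disjoint copies.} First replace the $f_k$ by disjoint copies $\bar f_k$ on $[0,\infty)$, or on $[0,1]$ after truncation. The Kruglov property of $X'$ gives the Johnson--Schechtman type comparison for the dual space: for independent mean zero $g_k$ in $X'$,
\[
\Bigl\|\sum g_k\Bigr\|_{X'}\lesssim \Bigl\|\sum \bar g_k\Bigr\|_{Z_{X'}}.
\]
Here $Z_{X'}$ is the associated space on the half-line, whose norm combines $L_2$-type control of small parts with $X'$-control of large parts. By duality this corresponds to a lower estimate of $\|\sum a_kf_k\|_X$ by disjoint sums.

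\textbf{Step 2: choice of test functions.} For each $k$, choose a norming functional $h_k\in X'$ for $f_k$, measurable with respect to $f_k$ and of mean zero, with $\|h_k\|_{X'}\le C$ and $\langle f_k,h_k\rangle\ge c$. Mean zero can be arranged by subtracting the mean, using $X\supset L_2$ and $\int f_k=0$. The $h_k$ are then independent and mean zero.

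Now consider the linear map $T:\bbR^N\to X'$, $T(a)=\sum a_kh_k$. By Step 1 together with the lower $2$-estimate of $X$ (dually, an upper $2$-estimate of $X'$) and $L_2\subset X$ (so that $X'\subset L_2$, which handles the $L_2$ part of $Z_{X'}$), we obtain $\|T a\|_{X'}\le C\|a\|_{\ell_2}$.

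\textbf{Step 3: a finite-dimensional argument.} The functionals vanishing on $V$ impose $n$ linear conditions $\langle Ta,v_j\rangle=0$, so $W=\{a:Ta\perp V\}$ has codimension at most $n$ in $\bbR^N$. Since $W$ is a subspace of dimension at least $N-n$ in $\ell_2^N$, it contains a vector $a$ with $\|a\|_2=1$ having many coordinates of size about $N^{-1/2}$. One way to see this is that the projection $P$ onto $W$ has trace $\ge N-n$, so $\sum_k\langle Pe_k,e_k\rangle\ge N-n$. Put $g=TPe_k/\|TPe_k\|$. Then
\[
\rho(f_k,V)\ge \langle f_k,TPe_k\rangle/C.
\]
We have $\langle f_k,TPe_k\rangle=\sum_j (Pe_k)_j\langle f_k,h_j\rangle$. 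By independence and mean zero, $\langle f_k,h_j\rangle=0$ for $j\ne k$, so this equals $(Pe_k)_k\langle f_k,h_k\rangle\ge c\,\langle Pe_k,e_k\rangle$. Averaging over $k$ gives some $k$ with $\langle Pe_k,e_k\rangle\ge1-n/N$, and hence $\rho(f_k,V)\ge c(1-\gamma)/C$. This yields the (IR) property with a suitable $\gamma$.

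\textbf{Main obstacle.} The key difficulty is Step 2: the bound $\|\sum a_kh_k\|_{X'}\lesssim\|a\|_2$. It requires the Kruglov-property comparison for $X'$, and then controlling the disjoint part $\|\sum a_k\bar h_k\|$ by $\|a\|_2$ using the upper $2$-estimate dual to the lower $2$-estimate of $X$. A further point is making the $h_k$ mean zero without losing norm control. If the direct duality estimate fails, the fallback is to split each $f_k$ into its peak part and its bounded part, treating the first by disjointness and the second by $L_2$ orthogonality.
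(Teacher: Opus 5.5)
Your proposal is correct and is essentially the paper's argument. You build a biorthogonal system $h_k$ by conditioning a norming functional on $\sigma(f_k)$ (a contraction on the maximal space $X'$) and subtracting its mean. You get $\|\sum a_kh_k\|_{X'}\lesssim\|a\|_2$ from the Kruglov property of $X'$ together with its upper $2$-estimate and $X'\subset L_2$, and the paper cites Braverman for exactly this. Your trace/projection step in $\ell_2^N$ is just the dual form of Lemma~\ref{lem_width}; it gives $\max_k\rho(f_k,V)\gtrsim 1-n/N$ rather than the averaged bound $(1-n/N)^{1/2}$, which is still enough for (IR).

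The one justification you omit is why a norming functional for $f_k$ can be taken in the K\"othe dual $X'$ rather than merely in $X^*$. The paper first observes that a lower $2$-estimate rules out disjoint copies of $\ell_\infty$, so $X$ is separable and $X^*=X'$. Your Step~1, passing to disjoint copies of the $f_k$, is not needed.
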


As a consequence, we obtain that the Lorentz spaces $L_{p,q}$ have  property (IR) if $1<p<2$, $1\le q\le 2$.

Observe that in all the cases, when we are able to prove that an s.s. $X$ has
the property (IR), we get additionally estimates from below for the widths of
dimension $n=N(1-\eps)$ for any fixed $\eps \in (0,1)$.

Now we present results of the opposite nature, insuring that an s.s. contains arbitrarily long sequences of mean zero independent 
functions that can be well approximated by low-dimensional subspaces.

\begin{theor}
    \label{th_intro_phi}
   Let $X$ be an s.s., $X\ne L_\infty$, and let the fundamental function $\phi_X$ of $X$ satisfy the condition:
$$
\varlimsup_{t\to 0+}{\phi_X(t)}t^{-1/2} = \infty.
$$
Then, $X\not\in\IR$.
\end{theor}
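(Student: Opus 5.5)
The plan is to build, for every $\varepsilon>0$ and large $N$, explicit independent mean zero functions $f_1,\ldots,f_N$ with $\|f_k\|_X\ge1$ and $d_{\varepsilon N}(\{f_k\},X)\to0$. By the remark after the definition of (IR), this shows $X\notin\IR$. The functions will be normalized ``sparse'' indicators. Fix a small $t$ (chosen along a sequence where $\phi_X(t)t^{-1/2}$ is large) and independent sets $A_1,\ldots,A_N\subset[0,1]$ with $|A_k|=t$. Put
$$
f_k=\frac{\chi_{A_k}-t}{c\,\phi_X(t)} .
$$
Since $\|\chi_{A_k}-t\|_X\ge \|\chi_{A_k}\|_X - t\|1\|_X \ge \tfrac12\phi_X(t)$ for small $t$, a suitable absolute $c$ gives $\|f_k\|_X\ge1$. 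The constants $-t/(c\phi_X(t))$ all lie in a single one-dimensional space. So it suffices to approximate the functions $h_k=\chi_{A_k}/\phi_X(t)$ by a subspace of dimension $n\le\varepsilon N$.

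I would realize the sets through $[0,1]\cong\{0,1\}^N$ with product measure, where $x_k$ is Bernoulli$(t)$ and $\chi_{A_k}(x)=x_k=\langle x,e_k\rangle$. Then I would approximate each $e_k$ by $Pe_k$, where $P$ is a suitable (random) orthogonal projection of rank $n$ on $\mathbb R^N$. The approximant $x\mapsto\langle x,Pe_k\rangle$ lies in the $n$-dimensional span of the coordinate functionals of $\mathrm{Im}P$. The error is then the function
$$
g_k(x)=\frac{1}{\phi_X(t)}\sum_{j}(\delta_{jk}-(Pe_k)_j)\,x_j .
$$
This is a sum of independent functions, and its coefficient vector $a=(I-P)e_k$ has $|a_k|\approx 1-n/N$ and $\|a\|_2\le1$. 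I would first center the $x_j$, with the means again absorbed into the constant direction. The problem is then to bound $\|\sum_j a_j(x_j-t)\|_X$.

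The key tool is the comparison of sums of independent functions with their disjoint copies (Johnson--Schechtman type estimates in symmetric spaces, as in \cite{JSh1,AS}). This gives
$$
\Big\|\sum_j a_j (x_j-t)\Big\|_X\lesssim \Big\|\sum_j a_j\chi_{B_j}\Big\|_X+\Big(\sum_j a_j^2\, t\Big)^{1/2},
$$
where the $B_j$ are disjoint with $|B_j|=t$, and the second term comes from the $L_2$ part (using $L_\infty\subset X$ with constant $1$ and a sub-Gaussian/$L_2$ bound). The $L_2$ term divided by $\phi_X(t)$ is at most $\sqrt t/\phi_X(t)$, which is small by the hypothesis. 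This is exactly where $\varlimsup\phi_X(t)t^{-1/2}=\infty$ enters.

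The disjoint part is the main obstacle. Here the coordinate $a_k$ itself is not small, so $P$ cannot simply be taken random. I would replace the exact identity by an $\eta$-approximation: choose $n$-dimensional vectors (an approximate-rank factorization of the identity, $\eta$-rank $\lesssim\eta^{-2}\log N$, or via Johnson--Lindenstrauss) so that $\|e_k-v_k\|_\infty\le\eta$ and $\|e_k-v_k\|_2\lesssim1$. Then
$$
\Big\|\sum_j a_j\chi_{B_j}\Big\|_X\le \eta\,\phi_X(Nt)+\ldots
$$
This forces a choice of $t$ with $Nt$ not too large, together with $\phi_X(Nt)/\phi_X(t)$ controlled by $\eta^{-1}$. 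The delicate point is balancing $N$, $t$, $\eta$ and $n\approx\eta^{-2}\log N\le\varepsilon N$ so that both error terms tend to zero. The hypothesis $X\ne L_\infty$ is used to guarantee $\phi_X(t)\to0$, which gives room for this balance.
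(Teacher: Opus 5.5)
\textbf{There is a genuine gap: the key estimate needs the Kruglov property, and no approximation linear in $x$ can replace it.}

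\textbf{Where it fails.} Your central estimate $\|\sum_j a_j(x_j-t)\|_X\lesssim\|\sum_j a_j\chi_{B_j}\|_X+(t\sum_j a_j^2)^{1/2}$ is the upper Johnson--Schechtman inequality \eqref{main ineq1}. It holds only for $X\in\bbK$, which the theorem does not assume. The hypotheses are met, for example, by $X=\mathrm{Exp}L_q$ with $q>2$, which is not Kruglov. There, with $a_j\equiv1$ and $t=1/N$, the left side is at least $(N-1)\phi_X(N^{-N})\asymp N^{1-1/q}(\log N)^{-1/q}\to\infty$, while the right side equals $2$.

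\textbf{Why it cannot be patched within your scheme.} With your functions $f_k$, no approximation that is linear in $x$ can work in such spaces. Suppose the approximant of $x_k$ is $\langle x,v_k\rangle+c_k$ with $\dim\mathrm{span}\{v_k\}\le n\le\gamma N$.
\begin{enumerate}
\item By Eckart--Young, $\sum_k\|e_k-v_k\|_2^2\ge N-n$, so some $a=e_k-v_k$ has $\|a\|_2^2\ge 1-\gamma$. Put $Y=\sum_j a_j(x_j-t)$.
\item Take $S\subset\{j\colon a_j>0\}$ with $|S|=s$. The event $\{x_j=1\ \forall j\in S\}$ has measure $t^s$. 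By Chebyshev on the remaining coordinates, $Y\ge(1-t)\sum_{j\in S}a_j-2\sqrt t\,\|a\|_2$ on three quarters of this event.
\item Also $|Y|\le 2\sqrt t\,\|a\|_2$ on a set of measure at least $3/4$; together with step 2 this disposes of the free constant $c_k$.
\item In $\mathrm{Exp}L_q$ one has $\phi_X(t^s)/\phi_X(t)\asymp s^{-1/q}$. Hence a relative error $\delta$ forces $\sum_{i\le s}a^*_i\lesssim \delta s^{1/q}+\sqrt t\,\|a\|_2$ for all $s$. This applies to the positive part of $a$, and likewise to the negative part.
\item Since $\sum_s s^{2/q-2}<\infty$ for $q>2$, this gives $\|a\|_2^2\lesssim_q\delta^2+t\|a\|_2^2$. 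That contradicts $\|a\|_2^2\ge1-\gamma$ for small $\delta$ and $t$.
\end{enumerate}
So the balancing you leave open at the end cannot be achieved in general. This is the random-point analogue of a known fact: the \emph{linear} width of $B_2^N$ in $\ell_\infty^N$ equals $(1-n/N)^{1/2}$, whereas its Kolmogorov width is $O(N^{-1/2})$ for $n\ge\gamma N$.

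\textbf{The missing idea.} Make the approximation depend on the point \emph{nonlinearly*}; this costs nothing in dimension.
\begin{itemize}
\item The paper fixes $V_n^*\subset\mathbb R^N$, $n\approx\gamma N$, with $\sup_{y\in B_2^N}\rho(y,V_n^*)_{\ell_\infty^N}\le C(\gamma)N^{-1/2}$, as in \eqref{width_b2}. It defines $\vec g(t)$ as a best $\ell_\infty^N$-approximant of $\vec f(t)$ from $V_n^*$. Since $\vec g(t)\in V_n^*$ for every $t$, the functions $g_1,\dots,g_N$ lie in an $n$-dimensional subspace of $X$.
\item The $f_k$ are independent copies of a function equal to $\pm M$ on a set of \emph{fixed} measure $\eps$ and $0$ elsewhere, with $M\phi_X(\eps)=1$. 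At a point with $s$ nonzero coordinates, $\vec f(t)\in M\sqrt s\,B_2^N$. So the error is at most $\gamma/2$ uniformly while $s\le s^*\asymp\gamma^2N/(C(\gamma)^2M^2)$.
\item Where $s>s^*$ one sets $\vec g=0$. The hypothesis is used only to choose $\eps$ with $C_1(\gamma)\eps/\phi_X(\eps)^2\le 1/2$. This makes that set have measure at most $2^{-s^*}$.
\item The contribution of that set, $M\phi_X(2^{-s^*})$, tends to $0$ as $N\to\infty$ because $X\ne L_\infty$.
\end{itemize}
The right regime is therefore $\eps$ fixed and $N\eps\to\infty$, not $Nt$ bounded. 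No comparison between sums of independent functions and their disjoint copies is needed, which is exactly why the theorem carries no Kruglov assumption.
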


The main assumption of the second result of such a sort  is directly related to the structure of the set of all $q$, for which the given space $X$ contains uniformly embedded $l_q$-subspaces of arbitrarily high dimension (see the more precise Theorem~\ref{th_LFR} below) .
\begin{theor}
\label{th_LFR_a}
Assume that an s.s. $X$ has the Kruglov property and for some $q\in(2,\infty]$
    the space $\ell_q$ is roughly lattice finitely representable in $X$. Then
    $X\not\in\IR$.
\end{theor}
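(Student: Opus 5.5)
Given $\eps>0$, we aim to construct independent mean zero $f_1,\dots,f_N\in X$ with $\|f_k\|_X\ge 1$ and $d_{\eps N}(\{f_1,\dots,f_N\},X)$ small. The idea is to transfer a known finite-dimensional phenomenon from $\ell_q$, $q>2$, to $X$. The coordinate unit vectors $e_1,\dots,e_N$ in $\ell_q^N$, $2<q\le\infty$, admit good low-dimensional approximation. This is the $L_p$, $p>2$, non-rigidity mechanism of~\cite{M24}, and it rests on the classical estimate $d_n(B_1^N,\ell_q^N)\lesssim n^{-1/2}$ up to logarithmic factors for $q\ge 2$ (Kashin--Gluskin type, \S\ref{Prel_width}). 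In particular, for every $\eps$ we have $d_{\eps N}(\{e_k\}_{k=1}^N,\ell_q^N)\to0$ as $N\to\infty$, at least along a subsequence.

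The steps are as follows.

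(1) Use rough lattice finite representability of $\ell_q$ in $X$. For each $N$ it gives disjointly supported functions $g_1,\dots,g_N\in X$ on $[0,1]$ such that $\|\sum a_kg_k\|_X\asymp\|(a_k)\|_{\ell_q^N}$ with constants independent of $N$. Since only rough representability is assumed, we may need to pass to blocks or equimeasurable pieces. We also arrange each $g_k$ to have small support and, by symmetry, mean zero. For example, we can replace $g_k$ by $g_k(\cdot)$ minus a sign-reflected copy on a disjoint piece, or symmetrize, keeping lattice-equivalence up to constants.

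(2) Pass from disjoint functions to independent ones via the Kruglov property. Let $f_k$ be independent copies, equimeasurable with $g_k$ and mean zero. The Kruglov property of $X$ gives the inequality $\|\sum_k f_k\|_X\lesssim\|\sum_k \bar f_k\|_{Z_X}$, where $\bar f_k$ are disjoint copies placed on $[0,\infty)$ (the Johnson--Schechtman / Braverman comparison~\cite{JSh1,Brav,AS}). Applying this to $\sum a_k f_k$, using symmetry and the small supports to bring the disjoint copies back into $[0,1]$, we get the upper bound $\|\sum a_kf_k\|_X\lesssim\|\sum a_k g_k\|_X\asymp\|a\|_{\ell_q^N}$. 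Moreover $\|f_k\|_X=\|g_k\|_X\asymp1$, and we rescale so that $\|f_k\|_X\ge1$.

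(3) Approximation. The linear map $T:\ell_q^N\to X$, $e_k\mapsto f_k$, has norm $\le C$ independent of $N$. If $V\subset\ell_q^N$ with $\dim V\le\eps N$ approximates each $e_k$ within $\delta$, then $T(V)$ approximates each $f_k$ within $C\delta$. Hence
\[
d_{\eps N}(\{f_k\},X)\le C\,d_{\eps N}(\{e_k\},\ell_q^N)\to0.
\]
This contradicts (IR).

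For $q=\infty$ (or very large $q$) we would instead reduce to some finite $q>2$, since $\ell_\infty$ roughly representable implies every $\ell_q$ is. Alternatively we could use that $\ell_\infty^N$ contains $\ell_q^N$-like behaviour of unit vectors directly.

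The main obstacle is step (2): the Kruglov inequality compares independent sums with disjoint sums on $[0,\infty)$ in $Z_X$. To return to $X$ on $[0,1]$ we need $\sum_k m(\supp g_k)\le 1$ and no loss from the $L_1$-part of the $Z_X$ norm. I would handle this by taking the $g_k$ with total support at most $1$ from the outset, which the disjointness in (1) already gives.
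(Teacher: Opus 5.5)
Your proof is correct and follows the paper's argument, which appears there as Theorem~\ref{th_LFR}. The steps are the same: disjoint functions realizing $\ell_q^N$, and symmetric independent copies with the same distribution of moduli. Then comes the Kruglov comparison \eqref{main ineq1}, where the $L_2$-tail of the $Z_X^2$-norm vanishes because the disjoint supports have total measure at most $1$, so the extra modifications of the $g_k$ in your step (1) are unnecessary. Finally, the operator $T\colon\ell_q^N\to X$ is combined with the octahedron width bound. The paper only adds the quantitative form $d_n\le CN^{-\delta}$ for $n\ge N^{1-\delta}$.

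Two side remarks need correcting, though neither breaks the argument.

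First, the bound $d_n(B_1^N,\ell_q^N)\lesssim n^{-1/2}$ (up to logarithms) is the $q=\infty$ estimate. For $2<q<\infty$ it carries an extra factor $N^{1/q}$, and for $q=2$ the widths equal $(1-n/N)^{1/2}$. Still, $d_{\eps N}(B_1^N,\ell_q^N)\lesssim_\eps N^{1/q-1/2}\to0$ as $N\to\infty$, with no subsequence needed (cf.~\eqref{width_b1a}).

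Second, your reduction of the case $q=\infty$ to finite $q$ rests on a false claim. Already $\LFR(L_\infty)=\{\infty\}$, and in general $\LFR$ need not be an interval. The reduction is also unneeded. Step (3) uses only the upper estimate, and that passes from $\ell_\infty^N$ to $\ell_q^N$ because $\|a\|_{\ell_\infty^N}\le\|a\|_{\ell_q^N}$. Moreover, $q=\infty$ is covered directly by \eqref{width_b2_precise}.
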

As a consequence, we obtain that an s.s. $X$ does not have the (IR) property whenever its lower Boyd index $\alpha_X$ belongs to the interval $(0,1/2)$. Moreover, Theorem~\ref{th_LFR_a} implies that the Lorentz spaces $L_{p,q}$ do not belong to the class (IR) if $\max\{p,q\}>2$.
The question whether the spaces $L_{2,q}$, $1\le q<2$, possess the
(IR) property, remains open.

In Section~\ref{sec_prelim}, we recall the necessary definitions and auxiliary results.
Section~\ref{sec_main} contains proofs of the theorems formulated in the Introduction and also some others, as well as open questions.

\section{Preliminaries}
\label{sec_prelim}


\subsection{Independent functions}
\label{Prel1}

The main object of our study are families of independent functions. Recall that
measurable functions (random variables) $f_1,f_2,\dots,f_N$, defined on
a probability space $(\Omega,\Sigma,{\mathbb P}),$ are called
{\it independent} if for any intervals $I_k$, $k=1,2,\dots,N$, from the real line the following equality holds:
$$
{\mathbb P}\{\omega\in \Omega:\,f_k(\omega)\in
I_k,\,k=1,2,\dots,N\}\;=\;\prod_{k=1}^N \mathbb P\{\omega\in
\Omega:\,f_k(\omega)\in I_k\}.$$ 

Measurable functions $f$ and $g$ are said to be {\it identically distributed} whenever we have
$$
{\mathbb P}\{\omega\in \Omega:\,f(\omega)>\tau\}={\mathbb P}\{\omega\in \Omega:\,g(\omega)>\tau\}\;\;\mbox{for each}\;\;\tau\in{\mathbb{R}}.$$
A function $f$ is said to be {\it symmetrically distributed} on $\Omega$ if $f$ and $-f$ are identically distributed.
Clearly,   such a function $f$ is mean zero on $\Omega$, i.e., 
$$\bbE f := \int_\Omega f(\omega)\,d{\mathbb P}(\omega)=0.$$

In what follows, we consider, mainly, measurable functions on the interval $[0,1]$ with Lebesgue measure $m$, defined on the $\sigma$-algebra of Lebesgue measurable sets. 

One of the most important examples of sequences of identically and symmetrically distributed independent functions on $[0,1]$ is the sequence of Rademacher functions $r_k(t)=\sign\sin 2^k\pi t$, $k=1,2,\dots$.

For more detailes on properties of systems of independent functions in function spaces, see, for instance, \cite[Chapter~2]{KS}.

\subsection{Symmetric spaces}
\label{Prel_symm}

Real-valued (Lebesgue) measurable functions $x(t)$ and $y(t)$ on $[0,1]$ are said to be {\it equimeasurable} if the functions $|x(t)|$ and $|y(t)|$ are  identically distributed (with respect to the Lebesgue measure).
In particular, every measurable function $x(t)$ is equimeasurable with its non-increasing left-continuous {\it rearrangement} $x^{\ast}(t)$ defined by
$$
x^{\ast}(t):=\inf \{\tau \geq 0\;:\; m\{s\in [0,1]\colon |x(s)| > \tau \} < t\},
\quad 0<t\le 1.
$$

A Banach function space $X$ of measurable functions (equivalence classes of) on $[0,1]$ is called {\it
symmetric} (s.s.) if 1) from $x \in X$ and $|y(t)|\le |x(t)|$ a.e., where $y(t)$ is measurable, it follows that $y\in X$ and ${\|y\|}_X \le {\|x\|}_X;$ 2) if $x \in X$ and the functions $x$ and $y$ are equimeasurable, it follows that $y\in
X$ and ${\|y\|}_X = {\|x\|}_X$.

Without loss of generality, we will assume that $\|\chi_{[0,1]}\|_X=1$ (in what follow, $\chi_A$ is the characteristic function of a set $A$). Then, for every s.s. $X$ on $[0,1]$ the one-norm embeddings $L_\infty[0,1]\stackrel{1}{\subset} X\stackrel{1}{\subset} L_1[0,1]$ ~hold \cite[Theorem II.4.1]{KPS}.

If $X$ is an s.s., then the {\it associated (or K\"{o}the dual)} space $X'$ consists of all measurable functions $y$ for which
$$
\|y\|_{X'}:=\sup\,\Bigl\{\int_0^1{x(t)y(t)\,dt}:\;
\|x\|_{X}\,\leq{1}\Bigr\}<\infty.
$$
The space $X'$ is also symmetric; it is isometrically embedded into the (Banach) dual $X^*$, and $X'=X^*$ if and only if $X$ is separable (see, e.g.,~\cite[\S\,II.4.5]{KPS}). Moreover, the space $X'$ is {\it maximal}
(or has the {\it Fatou property}), i.e., from the conditions that $x_n\in X'$, $n=1,2,\dots,$ $\sup_{n=1,2,\dots}\|x_n\|_{X'}<\infty$ and
$x_n\to{x}$ a.e. it follows that $x\in X'$ and $||x||_{X'}\le \liminf_{n\to\infty}{||x_n||_{X'}}.$

Let $X$ be an s.s. on $[0,1]$. By $\phi_X$ we will denote the {\it fundamental function} of $X$, which is defined by $\phi_X(t):=\|\chi_{(0,t]}\|_X$.
Also, the contraction operator ${\sigma}_\tau x(t):=x(t/\tau)\chi_{(0,\tau)}(t)$, $0\le t\le 1$ is bounded in $X$ for any $\tau\in(0,1]$. The {\it lower Boyd index} of $X$ is defined as follows:
$$
\alpha_X:=\lim\limits_{\tau \to 0+}\frac{\ln {\|{\sigma}_\tau\|}_{X \to X}}{\ln \tau}.
$$
For every s.s. $X$ we have $0\leq\alpha_X\le 1$.

Recall now the definition of some classes of s.s.'s, which will be duscussed in more detail in Section  \ref{sec_main}.

Let $1<p<\infty$, $1\le q\le\infty$. The Lorentz space $L_{p,q}$ consists of all measurable functions $x(t)$ on $[0,1]$ such that
$$
\|x\|_{p,q}:=\Big(\int_0^1 x^*(t)^q\,d(t^{q/p})\Big)^{1/q}<\infty$$
(with a natural modification for $q=\infty$). Though the functional $x\mapsto \|x\|_{p,q}$ for $q>p$ is not subadditive, the space $L_{p,q}$ may be equipped  with an equivalent symmetric norm for all $p$ and $q$.
Moreover, $L_{p,q_1}\subset L_{p,q_2}$ if $1\le q_1\le q_2\le\infty$, and $L_{p,p}=L_p$, $1<p<\infty$, isometrically \cite[Lemma~II.6.5]{KPS}.

Another natural generalization of $L_p$-spaces is the family of Orlicz spaces.
Let $\Phi$ be an {\it Orlicz function}, i.e., a (strictly) increasing, convex, continuous function on the half-axis $[0, \infty)$ such that $\Phi(0) = 0$, $\Phi(1)=1$.
The {\it Orlicz space} $L_\Phi$ consists of all measurable functions $x(t)$ on $[0,1]$, for which the following Luxemburg norm is finite: 
$$
\| x \|_{L_\Phi}: = \inf \left\{\lambda > 0 \colon \int_0^1 \Phi\Big(\frac{|x(t)|}{\lambda}\Big) \, dt \leq 1 \right\}.
$$
In particular, if $\Phi(u)=u^p$, $1\le p<\infty$, we have $L_\Phi=L_p$ with the usual norm.

Note that the definition of the space $L_\Phi$ depends (up to
norm equivalence) only on the behavior of the function $\Phi$ at infinity
(i.e., for large values of the argument).

If $\Phi$ is an Orlicz function, then its {\it Yang conjugate function} $\Phi'$ is defined as follows:
$$
\Phi'(u):=\sup_{t>0}(ut-\Phi(t)),\;\;u>0.
$$

Further, we will need the following expression for the lower Boyd index of the space $L_\Phi$:
\begin{equation}
\label{index}
    \alpha_{L_\Phi}={\sup}\Big\{\frac1{p}:\,\sup_{a,u\ge 1}\frac{\Phi(au)}{\Phi(u)a^p}<\infty\Big\}
\end{equation}
(see, e.g., \cite[Proposition 2.b.5]{LT}).

Further information on s.s.'s see in the books \cite{KPS,LT}.

\subsection{Lattice properties of symmetric spaces.}
\label{Prel3}

As it follows from the definition, every s.s. is a Banach function lattice with respect to
the usual a.e. order. All the concepts of this subsection pertain also to this more general class of spaces.

Let $1\le p\le\infty$. We say that an s.s. $X$ satisfies a lower (resp. an
upper) $p$-estimate if there exists a constant $C_X>0$ such that for all $N\in\bbN$ and any pairwise disjoint functions $x_k\in X$, $k=1,\dots,N$, the following holds:
$$
\Big\|\sum_{k=1}^N x_k\Big\|_X\ge C_X^{-1}\Big(\sum_{k=1}^N \|x_k\|_X^p\Big)^{1/p}$$
(resp.
$$
\Big\|\sum_{k=1}^N x_k\Big\|_X\le C_X\Big(\sum_{k=1}^N \|x_k\|_X^p\Big)^{1/p})
$$
(with natural modification for $p=\infty$).
Similarly, an s.s. $X$ is $p$-concave (respectively, $p$-convex) if
$$
 \Big\|(\sum_{k=1}^N |x_k|^p)^{1/p}\Big\|_X\ge D_X^{-1} \Big(\sum_{k=1}^N \|x_k\|^p\Big)^{1/p}$$
(resp.
$$
 \Big\|(\sum_{k=1}^N |x_k|^p)^{1/p}\Big\|_{X}\le D_X \Big(\sum_{k=1}^N \|x_k\|^p\Big)^{1/p}\;),$$
for any $x_1,\ldots,x_N\in X$.
Clearly, each $p$-concave (resp. $p$-convex) s.s. $X$ satisfies a lower (resp. an upper) $p$-estimate.

Let $1\le q\le \infty$. Then, $\ell_q$ if $1\le q<\infty$ ($c_0$ if $q=\infty$) is said to be  {\it roughly lattice finitely representable} in an s.s. $X$ if there exists a constant $K>0$ such that for every $N\in\mathbb{N}$ one can find pairwise disjoint functions $x_k\in X$, $k=1,2,\dots,N$, which satisfy, for arbitrary $a_k\in\mathbb{R}$,  the following condition:
\begin{equation}
\label{first}
K^{-1}\|(a_k)\|_{\ell_q^N}
\le \Big\|\sum_{k=1}^N a_kx_k\Big\|_X
\le K\|(a_k)\|_{\ell_q^N}.
\end{equation}

A given s.s. $X$ denote by $\LFR(X)$ the set of all $q\in [1,\infty]$ such that $\ell_q$ is roughly
lattice finitely representable in $X$. According to results of the paper \cite{Shepp},
$\sup \LFR(X)$ and $\inf \LFR(X)$ belong to the set $\LFR(X)$
for every Banach lattice. Moreover, if $X$ is an s.s., then $1/\alpha_X\in\LFR(X)$~\cite[Theorem~2.b.6]{LT} (for a full proof of this fact, see \cite[Theorem 4]{A-11}).

It is clear that the space $L_q$ satisfies a lower (resp. an upper) $p$-estimate if and only if $q\le p\le\infty$ (resp. $1\le p\le q$), and also that $\LFR(L_p)=\{p\}$.

\vskip0.1cm

\subsection{Kruglov property and comparison of norms of sums of independent functions and their disjoint copies.}
\label{Prel4}

Further, the following property of s.s.'s will play an important role.

Let $f$ be a measurable function (random variable) on $[0,1].$ By $\pi(f)$ we denote the sum $\sum_{k=1}^N f_k$, where $f_k$~are independent copies of $f$, and let $N$~be a random variable, having a Poisson distribution with parameter 1 and independent of
the sequence $\{f_k\}$.
A s.s. $X$ on $[0,1]$ is said to have the {\it Kruglov property} $(X\in \bbK)$ if from $f\in X$ it follows that $\pi(f)\in X$.

Somewhat simplifying, we can say that an s.s. $X\in \bbK$ if it lies sufficiently ``far''\:from the space $L_\infty$.
In particular, every maximal s.s. $X$, which contains the space $L_p$ for some $p<\infty$, has the Kruglov property  \cite[Theorem~I.2]{Brav} (and hence $X\in \bbK$ if $\alpha_X>0$ \cite[Theorem~I.3]{Brav}). Moreover, if $\mathrm{Exp}L_q$ is the Orlicz space generated by an Orlicz function equivalent to the function $e^{u^q}$ for large $u>0$, then $\mathrm{Exp}L_q\in \bbK$ if and only if $0<q\le 1$ (for more details on what symmetric spaces have the Kruglov property, see \S\,4.3 in the survey \cite{AS}).

From the results of \cite{JSh1} (see also \cite{AS} and, in the more general quasi-normed case, \cite{ASZ-14}) if follows that the condition  $X\in \bbK$ ensures that the $X$-norms of sums of independent functions may be estimated from above (up to a constant) by those of their disjoint copies. To give a precise formulation of this statement, we recall the following important definition (see \cite{JMShT} or \cite[2.f]{LT}). If $X$ is an s.s. on $[0,1]$, then the set
$Z_X^2$ consists of all measurable functions $f$ on $[0,\infty)$ such that
\begin{equation}
\label{main ineq1dop}
\|f\|_{Z_X^2}:=\|f^*\chi_{[0,1]}\|_X+\|f^*\chi_{[1,\infty)}\|_{_{L_2[1,\infty)}}<\infty.
\end{equation}
The set $ Z_X^2$ equipped with a suitable norm becomes a symmetric space on the half-axis $[0,\infty)$ (the definition of s.s.'s for this case is similar to that for the interval $[0,1]$, see \cite[Chapter II]{KPS}). 

By \cite{JSh1} (see also \cite[\S\,6]{AS}), we have the following: if $X\in \bbK$, $f_k\in X$ are independent functions such that $\int_0^1 f_k(t)\,dt=0$, $k=1,\dots,N$, then 
\begin{equation}
    \label{main ineq1}
\Big\|\sum_{k=1}^N f_k\Big\|_{X}\le C(X) \Big\|\sum_{k=1}^N \bar{f}_k\Big\|_{Z_{X}^2},
\end{equation}
where $\bar{f}_k(t)=f_k(t-k+1)\chi_{[k-1,k]}(t),$ $k=1,2,\dots,N$, 
and the constant $C(X)$ depends only on $X$ (note that the opposite inequality holds for any s.s.). In what follows, we will make
use of the following consequence of inequality  \eqref{main ineq1}: if $X\in \bbK$, then the estimate
\begin{equation}
\label{main ineq2}
\Big\|\sum_{k=1}^N f_k\Big\|_{X}\le B \Big(\sum_{k=1}^N \|f_k\|_X^2\Big)^{1/2}
\end{equation}
is satisfied for some $B>0$ and all mean zero independent $f_k\in X$, $k=1,\dots,N$, if and only if $X$ satisfies an upper $2$-estimate and $X\subset L_2$ (see \cite[Theorem~II.2.4]{Brav} or \cite[Corollary~40]{ASZ-14}).

\vskip0.1cm

\subsection{Widths.}
\label{Prel_width}

All the results on widths mentioned below can be found, for example, in \cite[Chapter 13]{LGM} and \cite{MR25}.

Let $1\le q\le\infty$ and let $B_q^N$ denote the unit ball of the space $\ell_q^N$. Thus, $B_1^N$ is the octahedron $\conv\{\pm e_k\}_{k=1}^N$ (here, $e_1,\ldots,e_N$ are the standard basis vectors of $\bbR^N$) and $B_\infty^N$ is the cube $[-1,1]^N$.

Recall that the definition of the Kolmogorov width (see \eqref{base_0}) involves linear subspaces, which contain, in particular, the null vector. Therefore, the Kolmogorov width of a set $K$ does not change when replacing any elements of $K$ with the opposite ones.
In what follows, we will use the following simple relations:
$$
d_n(K, X) = d_n(\conv K, X)
$$
and
\begin{equation}
    \label{width_K_K}
d_n(K-K,X) \le 2d_n(K,X),
\quad\mbox{where $K-K:=\{x-y\colon x,y\in K\}$.}
\end{equation}
Observe that the first equality implies that
$d_n(\{e_1,\ldots,e_N\},\ell_q^N)=d_n(B_1^N,\ell_q^N)$.

Next, we will also need estimates for the widths of the Euclidean ball. It is known that
\begin{equation}
\label{width_b2_precise}
d_n(B_2^N,\ell_\infty^N) \le Cn^{-1/2}\ln^{1/2}(2N/n),
\end{equation}
which implies the following consequence: for any $\gamma\in (0,1)$
\begin{equation}
    \label{width_b2}
    d_n(B_2^N,\ell_\infty^N) \le C(\gamma)N^{-1/2}\quad\mbox{if $n\ge \gamma
    N$.}
\end{equation}
From well-known estimates of the octahedron's widths (or, otherwise,  from~\eqref{width_b2_precise}) it follows that for $q>2$
\begin{equation}
\label{width_b1a}
d_n(B_1^N,\ell_q^N) \le C(q)N^{-\delta}
\quad\mbox{if $n\ge N^{1-\delta}$,}
\end{equation}
where $\delta=\delta(q)>0$ is sufficiently small.
We will also use the simple equality
\begin{equation}
    \label{width_cube}
    d_n(B_\infty^N,\ell_\infty^N) = 1,
    \quad n<N.
\end{equation}

 Observe that the Kolmogorov width $d_n(K,X)$ is estimated from below by the "averaged width"\:$d_n^\avg(K,X)$, in which the supremum over elements $x\in K$ is replaced by the average with respect to some measure. In particular, for $1\le q<\infty$ and for a finite set $\{x_1,\ldots,x_N\}\subset X$ we put
    $$
    d_n^\avg(\{x_1,\ldots,x_N\},X)_q
    := \inf_{\dim V\le n}\left(\frac1N\sum_{k=1}^N \rho(x_k,V)^q\right)^{1/q}.
    $$
    
       Let $X,Y$ be Banach spaces and let $T:X\to Y$~be a bounded linear operator, $K\subset X$. Then,
    \begin{equation}
        \label{width_T}
        d_n(T(K),Y) \le \|T\|\cdot d_n(K,X),
    \end{equation}
   and also for arbitrary $x_1,\ldots,x_N\in X$
    \begin{equation}
        \label{avg_width_T}
        d_n^\avg(\{Tx_1,\ldots,Tx_N\},Y)_q \le \|T\|\cdot
        d_n(\{x_1,\ldots,x_N\},X)_q.
    \end{equation}

  In the proof of Theorem   \ref{th_intro_rigid}, we will need the following simple lemma, a special case of Theorem 1.1 from~\cite{Mdiss}. 
    \begin{lemma}
        \label{lem_width}
        Assume that $\{x_k\}_{k=1}^N$ is a linearly independent system of elements in a linear normed space $X$, the system $\{x_k^*\}_{k=1}^N\subset X^*$ is conjugate to $\{x_k\}_{k=1}^N$, and for some $B>0$ the following inequality holds:
         \begin{equation}
            \label{dual_cond}
        \Big\|\sum_{k=1}^N a_kx_k^*\Big\|_{X^*}
        \le B \Big(\sum_{k=1}^N a_k^2\Big)^{1/2},
        \quad\forall a_1,\ldots,a_N \in\mathbb R. 
        \end{equation}
        Then, we have
        $$
            d_n^\avg(\{x_1,\ldots,x_N\}, X)_2
            \ge B^{-1}(1-n/N)^{1/2},
            \quad 1\le n\le N.
        $$
    \end{lemma}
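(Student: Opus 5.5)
We need to prove Lemma \ref{lem_width}: given biorthogonal $\{x_k\},\{x_k^*\}$ with the dual $\ell_2$ upper bound \eqref{dual_cond}, the averaged width satisfies $d_n^\avg(\{x_k\},X)_2\ge B^{-1}(1-n/N)^{1/2}$.

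The plan is to transfer the problem to Euclidean space via the coefficient map and then apply the orthonormal-system inequality \eqref{base_l2}. Define $T\colon X\to\ell_2^N$ by $Tx:=(x_1^*(x),\ldots,x_N^*(x))$. By duality, $\|Tx\|_{\ell_2^N}=\sup_{\|a\|_2\le 1}\sum_k a_k x_k^*(x)=\sup_{\|a\|_2\le1}\bigl(\sum_k a_kx_k^*\bigr)(x)\le B\|x\|_X$ by \eqref{dual_cond}, so $\|T\|\le B$. Biorthogonality gives $Tx_k=e_k$, the standard orthonormal basis of $\ell_2^N$.

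Now fix any subspace $V\subset X$ with $\dim V\le n$, and choose $v_k\in V$ with $\|x_k-v_k\|_X\le\rho(x_k,V)_X+\eps$. Then $TV$ is a subspace of $\ell_2^N$ of dimension at most $n$, and
$$
\rho(e_k,TV)_{\ell_2^N}\le\|Tx_k-Tv_k\|_{\ell_2^N}\le B\bigl(\rho(x_k,V)_X+\eps\bigr).
$$
Squaring and summing over $k$, and using \eqref{base_l2} for the orthonormal system $e_1,\ldots,e_N$, we get
$$
N-n\le\sum_{k=1}^N\rho(e_k,TV)^2\le B^2\sum_{k=1}^N\bigl(\rho(x_k,V)_X+\eps\bigr)^2 .
$$
Letting $\eps\to0$ and dividing by $N$ gives $\frac1N\sum_k\rho(x_k,V)_X^2\ge B^{-2}(1-n/N)$. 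Taking the infimum over $V$ yields the claim. (This is just \eqref{avg_width_T} with $q=2$, applied to $T$.)

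The only point needing care is that \eqref{base_l2} holds when $\dim TV<n$. It does, since one can enlarge the subspace, or note that $\sum_k\rho(e_k,W)^2=N-\operatorname{tr}P_W=N-\dim W$. The linear independence hypothesis is not needed beyond guaranteeing that a conjugate system exists.
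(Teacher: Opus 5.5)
Your proof is correct and is essentially the paper's argument: your coefficient map $T$ is the paper's analysis operator $A$, and your direct duality bound $\|T\|\le B$ is the same fact the paper gets from $A^*=S$ with $\|S\|\le B$. You then reprove \eqref{avg_width_T} inline and combine it with \eqref{base_l2}, as the paper does; your trace remark correctly covers the case $\dim TV<n$.
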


For completeness, we present a proof of this assertion here.
\begin{proof}
Consider the "synthesis"\:and "analysis"\:operators $S\colon\ell_2^N\to X^*$ and $A\colon X\to\ell_2^N$ defined by
        $$
        S\colon (a_1,\ldots,a_N)\mapsto \sum_{k=1}^N a_kx_k^*,
        \quad A\colon x\mapsto (\langle x,x_1^*\rangle,\ldots,\langle x,x_N^*\rangle).
        $$
Assumption~\eqref{dual_cond} means that $\|S\|\le B$.
Moreover, it is easy to verify that $A^*=S$, which implies $\|A\|=\|S\|\le B$.
Therefore, applying~\eqref{avg_width_T} to the operator $A$ and the set $\{x_1,\ldots,x_N\}$, we obtain:       
        $$
        d_n^\avg(\{e_1,\ldots,e_N\},\ell_2^N)_2
        \le \|A\|\cdot d_n^\avg(\{x_1,\ldots,x_N\},X)_2,
        $$
where $\{e_k\}_{k=1}^N$ is the standard basis in $\mathbb R^N$.
Since, by virtue of~\eqref{base_l2}, the width on the left-hand side
of this inequality is not less than $(1-n/N)^{1/2}$, the required estimate follows.
\end{proof}

 Finally, for a non-integer $r$, we set $d_r(K,X):=d_{\lfloor r\rfloor}(K,X)$ and similarly for the averaged width.

\vskip0.3cm

    \section{Main results}
    \label{sec_main}

\subsection{Spaces with the (IR) property}

    \begin{proof}[Proof of Theorem~\ref{th_intro_rigid}]

    First of all, note that the space $X$ is separable. Indeed, otherwise $X$ contains a subspace isomorphic to
    $\ell_\infty$, and the isomorphism can be chosen so that the unit vectors from $\ell_\infty$ correspond to some
    pairwise disjoint functions in $X$ (see, e.g., \cite[Proposition 1.a.7]{LT}). This, however, contradicts
    the assumption that $X$ satisfies a lower $2$-estimate.

    Since $X$ is separable, the dual space $X^*$ coincides with the associated space $X'$, and hence is also an
    s.s. on $[0,1]$.
    Therefore, by duality, for each $k=1,\dots,N$ there exists a function $g_k'\in X'$ such that $\int_0^1
        f_kg_k'\,dt=1$ and $\|g_k'\|_{X'}=1$.  Set
    $$
    g_k:=\mathbb{E}_{\mathcal{A}_k}g_k'-\mathbb{E}g_k',\;\;k=1,\dots,N,$$
    where $\mathbb{E}_{\mathcal{A}_k}$ is the conditional expectation operator with respect to the $\sigma$-algebra of
    subsets of $[0,1]$ generated by the function $f_k$. Then, first,
    $$
    \mathbb{E}g_k=\mathbb{E}\mathbb{E}_{\mathcal{A}_k}g_k'-\mathbb{E}g_k'=
    \mathbb{E}g_k'-\mathbb{E}g_k'=0.$$
    Second, as is well known, every conditional expectation operator has norm one, both in the spaces $L_1$ and
    $L_\infty$, and hence in every maximal s.s. \cite[Theorem II.4.9]{KPS}. Consequently, since the space $X'$ is maximal (see
    Section~\ref{Prel_symm}), we have
    $$
    \|g_k\|_{X'}\le \|\mathbb{E}_{\mathcal{A}_k}g_k'\|_{X'}+\|\mathbb{E}g_k'\|_{X'}\le 2,\;\;k=1,\dots,N.$$
    Moreover, it follows from the definition that the functions $g_k$, $k=1,\dots,N$, are independent, 
    $$
    \int_0^1 f_kg_k\,dt=\int_0^1 f_k\mathbb{E}_{\mathcal{A}_k}g_k'\,dt-\int_0^1 f_k\,dt\cdot\mathbb{E}g_k'
    =\int_0^1 \mathbb{E}_{\mathcal{A}_k}(f_kg_k')\,dt=\int_0^1f_kg_k'\,dt=1$$
    and, if $j\ne k$, then by the independence of $f_j$ and $\mathbb{E}_{\mathcal{A}_k}g_k'$, we have
    $$
    \int_0^1 f_jg_k\,dt=\int_0^1 f_j\,dt\cdot \int_0^1 \mathbb{E}_{\mathcal{A}_k}g_k'\,dt-
    \int_0^1 f_j\,dt\cdot\mathbb{E}g_k'=0.$$
    Thus, $\{g_k\}_{k=1}^N$ is a system conjugate to $\{f_k\}_{k=1}^N$.

    Next, since $X$ satisfies a lower $2$-estimate and $X\supset L_2$, it follows that $X'$ satisfies an upper
    $2$-estimate \cite[Proposition 1.f.5]{LT} and $X'\subset L_2$. Taking into account that $X'\in \bbK$, by
    \cite[Theorem~II.2.4]{Brav} (see also \cite[Corollary~40]{ASZ-14}), we obtain the following $\ell_2$-estimate of
    the form \eqref{main ineq2} for some $B>0$ and all $a_k\in\mathbb{R}$:
    \begin{equation*}
    \Big\|\sum_{k=1}^N a_kg_k\Big\|_{X'}
    \le B \Big(\sum_{k=1}^N a_k^2\|g_k\|_{X'}^2\Big)^{1/2}
    \le 2B \Big(\sum_{k=1}^N a_k^2\Big)^{1/2}.
        \end{equation*}
    Combining this with Lemma~\ref{lem_width}, we infer
     \begin{equation*}
            \label{rigid_ineq}
        d_n^{\avg}(\{f_1,\dots,f_N\},X)_2\ge c (1-n/N)^{1/2}.
        \end{equation*}
    with the constant $c=1/(2B)$.

    It remains to observe that the fact that $X$ has the property (IR) follows directly from the last inequality.
    \end{proof}


    \begin{theor}
    \label{main1}
    Let $X$ be an s.s. on $[0,1]$. Suppose that $X$ is separable and there exists $B=B(X)>0$ such that for any
    finite collection of independent functions $\{g_k\}_{k=1}^N\subset X'$ with $\int_0^1 g_k(t)\,dt=0$,
    $k=1,\dots,N$, the following estimate holds:
        \begin{equation}
     \label{Bahr-Ess}
    \Big\|\sum_{k=1}^N g_k\Big\|_{X'}
    \le B \Big(\sum_{k=1}^N \|g_k\|_{X'}^2\Big)^{1/2}.
        \end{equation}

    Then for any finite collection of mean zero independent functions $\{f_k\}_{k=1}^N\subset X$ with 
    $\|f_k\|_X\ge 1$, $k=1,\dots,N$, and any $n<N$, we have
         \begin{equation*}
         \label{Bahr-Esseen_a}
           d_n^{\avg}(\{f_1,\dots,f_N\},X)_2\ge (2B)^{-1} (1-n/N)^{1/2}.
    \end{equation*}

        In particular, the statement is true if at least one of the following conditions is satisfied:
        
        (i) $X$ satisfies a lower $2$-estimate, $X\supset L_2$, and $X'\in \bbK$;
        
        (ii) $X$ is $2$-concave and $X'\in \bbK$.
    \end{theor}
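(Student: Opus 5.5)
The argument will follow the proof of Theorem~\ref{th_intro_rigid}, with the assumption \eqref{Bahr-Ess} taking the place of the Kruglov argument. After that, we will check that (i) and (ii) each imply \eqref{Bahr-Ess}.

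\emph{Construction of the dual system.} Since $X$ is separable, $X^*=X'$. For each $k$ we pick $g_k'\in X'$ with $\|g_k'\|_{X'}=1$ and $\int_0^1 f_kg_k'\,dt=\|f_k\|_X\ge1$. We rescale it to $g_k'/\|f_k\|_X$, which has $X'$-norm at most $1$ and pairing exactly $1$ with $f_k$. Then we set
$$g_k:=\mathbb E_{\mathcal A_k}g_k'-\mathbb E g_k',$$
where $\mathcal A_k$ is the $\sigma$-algebra generated by $f_k$. Because $X'$ is maximal, conditional expectations have norm one on $X'$, so $\|g_k\|_{X'}\le2$. The $g_k$ are independent and mean zero, since $g_k$ is $\mathcal A_k$-measurable and the $f_k$ are independent. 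The computation already given in the proof of Theorem~\ref{th_intro_rigid} shows $\int f_jg_k=\delta_{jk}$. Hence $\{g_k\}$ is conjugate to $\{f_k\}$, and in particular the $f_k$ are linearly independent.

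\emph{Applying Lemma~\ref{lem_width}.} For any real $a_k$, the functions $a_kg_k$ are still independent and mean zero. By \eqref{Bahr-Ess},
$$\Big\|\sum a_kg_k\Big\|_{X'}\le B\Big(\sum a_k^2\|g_k\|_{X'}^2\Big)^{1/2}\le 2B\Big(\sum a_k^2\Big)^{1/2}.$$
Lemma~\ref{lem_width} with constant $2B$ then gives the claimed bound on $d_n^{\avg}$.

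\emph{Sufficient conditions.}
\begin{itemize}
\item[(i)] Separability of $X$ follows from the lower $2$-estimate, by the $\ell_\infty$/disjointness argument at the start of the proof of Theorem~\ref{th_intro_rigid}. By \cite[Proposition 1.f.5]{LT}, $X'$ satisfies an upper $2$-estimate, and $X\supset L_2$ gives $X'\subset L_2$. With $X'\in\bbK$, the criterion recalled around \eqref{main ineq2} (\cite[Theorem~II.2.4]{Brav}) yields \eqref{Bahr-Ess}.
\item[(ii)] A $2$-concave space satisfies a lower $2$-estimate, so it suffices to show $X\supset L_2$ and then apply (i).
\end{itemize}

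The main obstacle is the inclusion $X\supset L_2$ in case (ii). The plan is to use that $2$-concavity of $X$ makes $X'$ $2$-convex (duality of convexity and concavity, \cite[1.d]{LT}). For a $2$-convex symmetric space on $[0,1]$ we apply the $2$-convexity inequality to $N$ rearrangement-invariant copies of a function, for instance its cyclic shifts, whose square function averages out. This should give $\|g\|_{X'}\gtrsim\|g\|_{L_2}$, that is $X'\subset L_2$, and hence $L_2\subset X''$. Since $X$ is separable, $L_2\subset X$ follows.

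If the averaging step turns out to be delicate, the fallback is the known fact that a $2$-concave s.s. on $[0,1]$ contains $L_2$, because the fundamental function satisfies $\phi_X(t)\gtrsim t^{1/2}$ together with the concavity structure. Alternatively, we can use that $2$-concavity gives $X'$ $2$-convex, hence an upper $2$-estimate, together with $X'\subset L_2$, which is all that the Brav criterion needs.
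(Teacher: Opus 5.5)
Your proof of the main estimate and of (i) is the paper's own argument: the paper just points back to the proof of Theorem~\ref{th_intro_rigid}. Your rescaling $g_k'/\|f_k\|_X$ even corrects a small slip there, where $\int f_kg_k'=1$ and $\|g_k'\|_{X'}=1$ are asserted together although only $\|f_k\|_X\ge1$ is known.

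\textbf{Where you differ: case (ii).} The paper never passes through $L_2$ here. It applies the Khintchine-type square-function inequality for Kruglov spaces, \cite[Theorem 1]{A-08}, to the mean zero independent $g_k$: $\|\sum g_k\|_{X'}\le C(X)\|(\sum g_k^2)^{1/2}\|_{X'}$. The $2$-convexity of $X'$ then gives \eqref{Bahr-Ess} at once. You instead show that (ii) is a special case of (i). This is correct, and your averaging step does work.
\begin{itemize}
\item Take a bounded $g\in X'$ and its cyclic shifts $g_i(t)=g(\{t+i/N\})$. Set $A_N=\frac1N\sum_i g_i^2$.
\item $2$-convexity together with $\|h\|_{X'}\ge\|h\|_{L_1}$ gives $\|A_N^{1/2}\|_{L_1}\le D\|g\|_{X'}$.
\item $A_N\to\|g\|_{L_2}^2$ in $L_1$, because the averaging operators are $L_1$-contractions that converge on continuous functions. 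Using $|\sqrt a-\sqrt b|\le\sqrt{|a-b|}$, you get $\|g\|_{L_2}\le D\|g\|_{X'}$.
\item Truncation removes the boundedness assumption.
\end{itemize}
Your third variant is the most economical one. Braverman's criterion is applied to $X'$ itself, and it only needs an upper $2$-estimate (from $2$-convexity) plus $X'\subset L_2$. So the detour $L_2\subset X''\Rightarrow L_2\subset X$ is unnecessary. If you keep that detour, say explicitly why it holds: for separable $X$ one has $\|x\|_X=\|x\|_{X''}$, so $X$ is closed in $X''$, and the bounded truncations of $f\in L_2$ converge in $X$.

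\textbf{What each route buys.} Yours needs only Braverman's criterion plus an elementary embedding lemma. It also shows that condition (ii) is redundant given (i). The paper's route is two lines, but it relies on the deeper inequality of \cite{A-08}.

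\textbf{Drop the fallback justification.} The fact that a $2$-concave s.s.\ contains $L_2$ is true, but not for the reason you give. A lower $2$-estimate gives $\phi_X(t)\lesssim t^{1/2}$, not $\gtrsim$. Moreover, no bound on the fundamental function alone can yield $X\supset L_2$: $L_{2,q}$ with $q<2$ has $\phi(t)=t^{1/2}$ and does not contain $L_2$.
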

 \begin{proof}
 As a straightforward analysis of the proof of Theorem~\ref{th_intro_rigid} shows, it suffices to verify that condition (ii) implies inequality \eqref{Bahr-Ess}.
  
Applying the version of Khintchine's inequality for s.s.'s with the Kruglov property proved in \cite[Theorem 1]{A-08}
(see also \cite[Theorem 23]{ASZ-14} for quasi-Banach spaces) to the functions $g_k$ satisfying the conditions of the
theorem, we obtain:
    $$
 \Big\|\sum_{k=1}^N g_k\Big\|_{X'}\le C(X) \Big\|\Big(\sum_{k=1}^N g_k^2\Big)^{1/2}\Big\|_{X'}.
    $$
 Since the space $X'$ is $2$-convex \cite[Proposition 1.d.4]{LT}, inequality \eqref{Bahr-Ess} follows.
\end{proof}

    \begin{remark}
    An estimate analogous to \eqref{Bahr-Ess} was first obtained in 1965 in the paper \cite{BE} (see Theorem~2
    therein). More precisely, B.~von Bahr and C.~Esseen proved the following: if $1\le p\le 2$, $N\in\mathbb N$, and
    $\{f_k\}_{k=1}^N\subset L_p[0,1]$ is a sequence of independent functions with $\int_0^1 f_k(t)\,dt=0$,
    $k=1,\dots,N$, then
        \begin{equation*}\label{Bahr-Esseen}
            \Big\|\sum_{k=1}^N f_k\Big\|_{p}\le\Big(2\sum_{k=1}^N\|f_k\|_{p}^p\Big)^{1/p}.
    \end{equation*}
    Later, in \cite[\S\,II.2]{Brav}, a connection between such estimates and the Kruglov property was revealed, thanks to which similar results were obtained for (Banach) s.s.'s (an extension to the quasi-Banach setting see in \cite{ASZ-14}).

    \end{remark}
    
\begin{remark}
   Suppose $X$ is an s.s. such that estimate \eqref{Bahr-Ess} holds in the
    associated space $X'$. Then $X$
   satisfies a lower $2$-estimate and $X\supset L_2$ \cite[Theorem 38(b)]{ASZ-14}.
    \end{remark}

    \begin{remark}
      Let $X$ be a separable s.s. Inequality~\eqref{Bahr-Ess} holds trivially with $B=N^{1/2}$, and therefore, by Theorem \ref{main1}, we have         
        $$
        d_{N-1}(\{f_1,\ldots,f_N\},X)\ge (2N)^{-1}$$
        for every mean zero independent functions $f_1,\ldots,f_N$ from $X$. Hence, when verifying
       the property (IR) or similar inequalities, we may assume that $N$ is sufficiently large.
    \end{remark}

     \begin{remark}
         It is of interest to compare condition (ii) of Theorem~\ref{main1} with the result of
         \cite[Corollary 5.3, (ii)]{HM}, which implies that the basis vectors are ``rigid'' in any symmetric
         $2$-concave sequence space $X$:
         $$
         d_n(\{e_1,\ldots,e_N\},X) \ge c(X,\eps)\quad\text{for } n\ge N(1-\eps)\text{ and all } N.
         $$
     \end{remark}
    
    We now turn to concrete classes of s.s.'s. First, we refine the (IR) property for $L_p$-spaces.
     
    \begin{proposition}
        \label{prop_linf}
        Let $f_1,\ldots,f_N\in L_\infty[0,1]$ be independent, $\int_0^1 f_k(x)\,dx=0$, and $\|f_k\|_\infty \ge 1$,
        $k=1,\ldots,N$. Then
        $$
        d_{N-1}(\{f_1,\ldots,f_N\},L_\infty) \ge 1/2.
        $$
    \end{proposition}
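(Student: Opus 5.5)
The plan is to argue directly with a linear dependence among the approximants, testing it against averaging functionals built from sign patterns, rather than passing through \eqref{width_cube}. Fix $\eps\in(0,1)$ and an arbitrary subspace $V\subset L_\infty$ with $\dim V\le N-1$. The goal is to show $\max_k\rho(f_k,V)_\infty\ge(1-\eps)/2$; letting $\eps\to0$ then gives the bound $1/2$. Since the width does not change when $f_k$ is replaced by $-f_k$, we may assume that for each $k$ the set $P_k:=\{f_k\ge 1-\eps\}$ has positive measure. Because $\int_0^1 f_k=0$ and $f_k>0$ on $P_k$, the set $Q_k:=\{f_k\le 0\}$ also has positive measure.

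For each sign pattern $\sigma\in\{\pm1\}^N$ put
$$E_\sigma:=\bigcap_{\sigma_k=+1}P_k\cap\bigcap_{\sigma_k=-1}Q_k .$$
Since $P_k,Q_k$ lie in the $\sigma$-algebra generated by $f_k$, independence gives $m(E_\sigma)=\prod_k m(P_k\ \text{or}\ Q_k)>0$; this is the only place independence is used. Define the linear functional
$$\Lambda_\sigma(g):=\frac1{m(E_\sigma)}\int_{E_\sigma}g\,dt-\frac1{m(E_{-\sigma})}\int_{E_{-\sigma}}g\,dt,$$
which satisfies $|\Lambda_\sigma(g)|\le 2\|g\|_\infty$. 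Its key property is that $\sigma_k\Lambda_\sigma(f_k)\ge 1-\eps$ for every $k$. Indeed, if $\sigma_k=+1$, then $f_k\ge1-\eps$ on $E_\sigma$ and $f_k\le0$ on $E_{-\sigma}$. If $\sigma_k=-1$, the roles of the two sets are swapped.

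Now suppose $\|f_k-v_k\|_\infty\le r$ with $v_k\in V$ for all $k$. Since $\dim V\le N-1$, the $N$ vectors $v_1,\dots,v_N$ are linearly dependent, so there are reals $a_k$, not all zero, with $\sum a_kv_k=0$; normalize $\sum|a_k|=1$. Choose $\sigma$ with $\sigma_k=\sign a_k$, taking $+1$ when $a_k=0$. Then
$$1-\eps\le\sum_k|a_k|\,\sigma_k\Lambda_\sigma(f_k)=\Lambda_\sigma\Big(\sum_k a_kf_k\Big)=\Lambda_\sigma\Big(\sum_k a_k(f_k-v_k)\Big)\le 2r\sum_k|a_k|=2r.$$
Hence $r\ge(1-\eps)/2$. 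Since $V$ was arbitrary, $d_{N-1}(\{f_1,\ldots,f_N\},L_\infty)\ge(1-\eps)/2$ for every $\eps$, which gives the claim.

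The main obstacle is the choice of the test functional. A single average over $E_\sigma$ does not suffice, because the values of $f_k$ on the negative side are only known to be $\le0$ rather than $\le-(1-\eps)$. Subtracting the average over the antipodal cell $E_{-\sigma}$ restores a uniform sign gap of $1-\eps$, at the cost of the factor $2$ in the norm of $\Lambda_\sigma$, and this factor is exactly the source of the constant $1/2$.
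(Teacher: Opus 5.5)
Your proof is correct, and it takes a somewhat different route from the paper. The paper applies all $2^N$ averaging operators $I^\tau$ over the cells $M^\tau$ to map the approximation problem into $\ell_\infty^N$. It uses the product structure $I^\tau f_k=\bbE(f_k|M_k^{\tau_k})$ to show that $\conv\{I^\tau\vec f\}\supset[0,1-\delta]^N$. It then invokes the cube width $d_n(B_\infty^N,\ell_\infty^N)=1$ from \eqref{width_cube} together with $d_n(K-K)\le 2d_n(K)$ from \eqref{width_K_K}.

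Your argument is the dual proof of that cube-width fact, carried out directly in $L_\infty$. The linear dependence $\sum a_kv_k=0$ plays the role of an $\ell_1^N$-vector annihilating the $(N-1)$-dimensional image subspace, and $\sigma=\sign a$ picks out the cube vertex on which it is normed. The difference of averages over the antipodal cells $E_\sigma,E_{-\sigma}$ is your counterpart of the $K-K$ step; in both proofs this is where the factor $2$, and hence the constant $1/2$, comes from.

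What your version buys:
\begin{itemize}
\item It is self-contained and cites no width results.
\item It needs only pointwise sign information on the cells plus $m(E_\sigma)>0$, not the conditional-expectation identity behind the convex-hull containment.
\item It effectively uses only one antipodal pair of cells, the pair adapted to $V$.
\end{itemize}
What the paper's version buys is modularity. It fits the template used throughout the paper, where a finite-dimensional width estimate is transferred into the function space. It also makes visible that $1/2$ is exactly half the width of the cube in $\ell_\infty^N$.
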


    \begin{proof}
        Suppose that a good approximation of the family $\{f_k\}_{k=1}^N$ by an $n$-dimensional subspace is possible,
        i.e., the inequality
        \begin{equation}
            \label{linf_proof_approx}
        \left\|f_k - \sum_{j=1}^n a_{k,j}g_j\right\|_\infty \le 1/2-\eps,
        \quad k=1,\ldots,N,
        \end{equation}
        holds for some functions $\{g_j\}_{j=1}^n$, where $n<N$, a coefficient matrix $A=(a_{k,j})$, and $\eps>0$. We
        will arrive at a contradiction.
       
        Denote
        $$
        \vec{f}(t):=(f_1(t),\ldots,f_N(t)),
        \quad \vec{h}(t):=\left(\sum_{j=1}^n a_{1,j}g_j(t),\ldots,\sum_{j=1}^n
        a_{N,j}g_j(t)\right)=A\vec{g}(t).
        $$

        Fix a small $\delta>0$. By the condition $\|f_k\|_\infty \ge 1$, we may assume that
        $$
        m\{f_k(t)\ge 1-\delta\}>0\;\;\mbox{for all}\;k$$ 
        (if necessary, we replace $f_k$ by $-f_k$; this does not change the
        width).
        Since the $f_k$ are mean zero, we also have $m\{f_k(t)\le 0\}>0$.
       Putting
       $$M_k^0:=\{t\colon f_k(t)\le 0\}\;\;\mbox{and}\;\;M_k^1:=\{t\colon f_k(t)\ge 1-\delta\},$$
        for each tuple $\tau=(\tau_1,\ldots,\tau_N)\in\{0,1\}^N$ we form the set
        $M^\tau := \bigcap_{k=1}^N M_k^{\tau_k}$. Observe that, by the independence of the functions $f_k$, the sets $M^\tau$ are nonempty.         We will need also the averaging operators $I^\tau$ over $M^\tau$:
        $$
        I^\tau f := \bbE(f|M^\tau) = \frac{1}{m(M^\tau)}\int_{M^\tau}f(t)\,dt.
        $$

        By assumption~\eqref{linf_proof_approx}, we have
        \begin{equation}
            \label{linf_proof_approx2}
            m\{\|\vec{f}(t)-\vec{h}(t)\|_{\ell_\infty^N} \le 1/2-\eps\}=1.
        \end{equation}
 Consider the vector $I^\tau\vec{f}:=(I^\tau f_1,\ldots,I^\tau f_N)$ for some $\tau$. Again, by independence,
        $$
        I^\tau f_k = \bbE(f_k|M^\tau) = \bbE(f_k|M_k^{\tau_k}).
        $$
        Hence, $I^\tau f_k \le 0$ if $\tau_k=0$, and $I^\tau f_k \ge 1-\delta$ if $\tau_k=1$.
        Consequently,
        $$
        \conv\{I^\tau\vec{f}\colon \tau\in\{0,1\}^N\}\supset[0,1-\delta]^N.$$
        By~\eqref{linf_proof_approx2}, all the vectors $I^\tau\vec{f}$, $\tau\in\{0,1\}^N$, are well approximated by the vectors $I^\tau\vec{h}$, which, as is
        easy to see, lie in the $n$-dimensional subspace $\{Av\colon v\in\mathbb R^n\}$.
        Hence,
        $$
        d_n([0,1-\delta]^N,\ell_\infty^N)
        \le d_n(\{I^\tau\vec{f}\colon \tau\in\{0,1\}^N\},\ell_\infty^N)
        \le 1/2 - \eps.
        $$
        On the other hand, by~\eqref{width_K_K} and~\eqref{width_cube}, we have
        $$
        d_n([0,1]^N,\ell_\infty^N) \ge
        (1/2)d_n([-1,1]^N,\ell_\infty^N) = 1/2.$$ As a result, since
        $d_n([0,1-\delta]^N,\ell_\infty^N)= (1-\delta)d_n([0,1]^N,\ell_\infty^N)$, we obtain:
        $(1-\delta)/2 \le 1/2-\eps$, which yields a contradiction for $\eps>\delta/2$. Since $\delta>0$ is arbitrary,
        the assertion is proved.
    \end{proof}

    Now we can state the following refinement of Theorem \ref{extth}.

     \begin{cor}
        Let $f_1,\ldots,f_N\in L_p[0,1]$ be normalized mean zero independent functions. Then the following
        estimates hold:
    \label{cor_lq}
        \begin{itemize}
            \item for $p=1$: $d_{N(1-\eps)}^\avg(\{f_1,\ldots,f_N\},L_1)_1 \ge c(\eps)$ for any $\eps>0$;
            \item for $p\in(1,2]$: $d_n^\avg(\{f_1,\ldots,f_N\},L_p)_2 \ge c(p)\cdot(1-n/N)^{1/2}$;
            \item for $p=\infty$: $d_{N-1}(\{f_1,\ldots,f_N\},L_\infty) \ge 1/2$.
        \end{itemize}
    \end{cor}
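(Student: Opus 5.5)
The three items of Corollary~\ref{cor_lq} are handled separately; the case $p=\infty$ is exactly Proposition~\ref{prop_linf}, so nothing new is needed there.

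For $p\in(1,2]$ we apply Theorem~\ref{main1} with $X=L_p$, which is separable. We check condition~(ii). The space $L_p$ is $2$-concave for $p\le 2$. Its associated space is $X'=L_{p'}$ with $p'=p/(p-1)\in[2,\infty)$, and $L_{p'}$ has the Kruglov property because it is maximal and contains $L_{p'}$ itself (\cite[Theorem~I.2]{Brav}). Alternatively, condition~(i) can be used: $L_p$ satisfies a lower $2$-estimate and $L_p\supset L_2$. Theorem~\ref{main1} then yields
$$
d_n^\avg(\{f_1,\ldots,f_N\},L_p)_2\ge (2B)^{-1}(1-n/N)^{1/2}
$$
for $\|f_k\|_p\ge1$. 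Here $B=B(p)$ is the constant in~\eqref{Bahr-Ess} for $L_{p'}$, which is Rosenthal's inequality combined with the upper $2$-estimate. So $c(p)=(2B)^{-1}$.

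The case $p=1$ is the only nontrivial step, because $L_1$ is not separable in the needed dual sense: $X'=L_\infty$ lacks the Kruglov property, so Theorem~\ref{main1} does not apply directly. I would rerun the duality argument from the proof of Theorem~\ref{th_intro_rigid}. Choose $g_k'\in L_\infty$ with $\|g_k'\|_\infty=1$ and $\int f_kg_k'=\|f_k\|_1$. Then pass to $g_k=\bbE_{\mathcal A_k}g_k'-\bbE g_k'$. These are independent, mean zero, bounded by $2$, and form a system conjugate to $\{f_k\}$.

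The obstacle is that $\|\sum a_kg_k\|_\infty$ is not controlled by $\|a\|_2$; it only satisfies $\|\sum a_kg_k\|_\infty\le 2\|a\|_1$. So instead of the analysis operator into $\ell_2^N$, I would use $A\colon L_1\to\ell_\infty^N$, $x\mapsto(\langle x,g_k\rangle)_k$, which has norm at most $2$. By~\eqref{avg_width_T} with $q=1$, this gives
$$
d_n^\avg(\{e_k\},\ell_\infty^N)_1\le 2\,d_n^\avg(\{f_k\},L_1)_1 .
$$
It then remains to bound the averaged $\ell_\infty$-width of the basis vectors from below by a constant $c(\eps)$ for $n\le(1-\eps)N$. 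Pure $\ell_\infty$ would be too weak here, since the worst case is only of order $d_n(B_1^N,\ell_\infty^N)\asymp n^{-1/2}$.

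To fix this, I would exploit the averaging structure more carefully, as in \cite{M24}. One option is to replace $\ell_\infty^N$ by the independence-induced norm: $\|\sum a_kg_k\|_\infty$ for independent mean zero bounded $g_k$ is comparable to $\sum|a_k|\,\|g_k\|$-type quantities. More precisely, for symmetric bounded $g_k$ one has $\|\sum a_kg_k\|_\infty\gtrsim\sum|a_k|\,\|g_k\|_\infty$. That suggests taking $A$ into $\ell_\infty^N$ but comparing against the cube, as in Proposition~\ref{prop_linf}. Thus the plan is to cite \cite[Corollary~3.1]{M24} for the $L_1$ bound $d^\avg_{N(1-\eps)}\ge c(\eps)$, noting that it is precisely the rigidity of mean zero independent functions in $L_1$ established there. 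Reproving it would require the averaged octahedron-type lower bound, and I expect that to be the main difficulty if done from scratch.
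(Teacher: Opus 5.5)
Your proposal is correct and follows the paper's proof. The case $p=\infty$ is Proposition~\ref{prop_linf}; the case $p\in(1,2]$ follows from Theorem~\ref{main1}, since $L_p$ is separable and $2$-concave (equivalently, it satisfies a lower $2$-estimate and contains $L_2$) and $L_{p'}$ has the Kruglov property; and the case $p=1$ is quoted from \cite[Corollary~3.1]{M24}. The exploratory duality argument you sketch for $L_1$ is unnecessary (and, as you note yourself, the map into $\ell_\infty^N$ alone cannot yield the bound), so the citation is the entire proof of that item, exactly as in the paper.
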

    \begin{proof}
        The case $p=1$ was treated in~\cite[Corollary 3.1]{M24}.
    Since the space $L_p$ for $1<p\le 2$ satisfies all the conditions of Theorem~\ref{main1} (see
    Sections~\ref{Prel3} and \ref{Prel4}), we obtain the second assertion.
        Finally, the case $p=\infty$ is the content of Proposition~\ref{prop_linf}.

    \end{proof}

    Note that for $p\in(1,2]$, a similar but somewhat weaker estimate 
        $$
        d_n^\avg(\{f_1,\ldots,f_N\}, L_p)_p\ge c(p)(1-n/N)^{1/p}
    $$
  was obtained in \cite{MR25}   (see \S1.2 therein).

  Next, we proceed with the Orlicz spaces (see Section~\ref{Prel_symm}).
    By $L\log^\alpha L$, $\alpha>0$, we denote the space generated by an Orlicz function equivalent, for large $u$,
    to the function $u\ln^\alpha u$.

    \begin{cor}
    \label{cor1a}
    Suppose that
    \begin{eqnarray}
    \label{eq1}
    \Phi(au)\le K_1a^2\Phi(u)
    \end{eqnarray}
    for some $K_1>0$ and all $a\ge 1$, $u\ge 1$. Moreover, let the conjugate function $\Phi'$ satisfy the
    condition:
    \begin{eqnarray}
    \label{eq2}
    \Phi'(u+v)\le K_2\Phi'(u)\Phi'(v)
    \end{eqnarray}
    for some $K_2>0$ and all $u\ge 1$, $v\ge 1$.

    Then there exists $c>0$, depending only on $\Phi$, such that for an arbitrary collection of mean zero independent functions
    $\{f_k\}_{k=1}^N$ from the Orlicz space $L_\Phi$, with $\|f_k\|_{L_\Phi}\ge 1$,
    $k=1,\dots,N$, and any $n<N$, the following inequality holds:
    $$
        d_n^\avg(\{f_1,\dots,f_N\},L_\Phi)_2\ge c(1-n/N)^{1/2}.
    $$

    In particular, this is true for the space $L\log^\alpha L$ if $\alpha\ge 1$.
    \end{cor}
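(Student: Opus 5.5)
The plan is to derive Corollary~\ref{cor1a} from Theorem~\ref{main1}, condition (i). We must check three things: $L_\Phi$ is separable and satisfies a lower $2$-estimate, $L_\Phi\supset L_2$, and the associated space $L_\Phi'$ (the Orlicz space $L_{\Phi'}$, up to equivalence of norms) has the Kruglov property.

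\emph{Step 1: separability, lower $2$-estimate and $L_\Phi\supset L_2$.} Taking $u=1$ in \eqref{eq1} gives $\Phi(a)\le K_1a^2$ for $a\ge1$. Hence $\int\Phi(|x|)\le 1+K_1\int x^2$, so $L_2\subset L_\Phi$. Condition \eqref{eq1} with $a=2$ is the $\Delta_2$-condition at infinity, so $L_\Phi$ is separable and $L_\Phi^*=L_\Phi'=L_{\Phi'}$. For the lower $2$-estimate, \eqref{eq1} says that $\Phi(u)/u^2$ is equivalent to a non-increasing function for $u\ge1$, i.e.\ $\Phi$ is equivalent at infinity to a function $\Psi$ with $\Psi(\sqrt{t})$ concave. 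Then $L_\Phi$ is $2$-concave (a standard fact for Orlicz spaces whose function satisfies $\Phi(au)\le Ca^2\Phi(u)$). Alternatively, one can verify the lower $2$-estimate directly: for disjoint $x_k$ with $\|x_k\|=\lambda_k$ and $\sum\lambda_k^2=1$, one has $\int\Phi(|x_k|/\lambda_k)=1$. Applying \eqref{eq1} with $a=1/\lambda_k$ gives $\int\Phi(|x_k|)\gtrsim\lambda_k^2$ on the part where $|x_k|\ge1$; the part where $|x_k|$ is small is handled because only the behaviour at infinity matters (modify $\Phi$ near zero so that \eqref{eq1} holds for all $u>0$, costing only an equivalent norm). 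Summing gives $\int\Phi(|\sum x_k|)\gtrsim 1$, hence $\|\sum x_k\|\gtrsim 1$.

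\emph{Step 2: $L_{\Phi'}\in\bbK$.} This is the main step. Condition \eqref{eq2} is a submultiplicativity-type condition of exponential flavour, the kind characterizing Kruglov property for Orlicz spaces (compare with $\mathrm{Exp}L_q\in\bbK$ iff $q\le1$: for $\Phi'\sim e^{u}$, \eqref{eq2} holds). I would invoke the known criterion from \S4.3 of \cite{AS} (going back to Braverman): an Orlicz space $L_M$ has the Kruglov property iff $M(u+v)\le CM(u)M(v)$ for large $u,v$ (in the nontrivial case $L_M\subset L_p$ for all $p$). If $\Phi'$ grows polynomially, then $L_{\Phi'}\supset L_p$ for some finite $p$ and the Kruglov property follows instead from the cited result \cite[Theorem~I.2]{Brav} on maximal spaces. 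Orlicz spaces are maximal, so this covers that case too. Either way, $X'\in\bbK$, and Theorem~\ref{main1}(i) gives the estimate with $c=(2B)^{-1}$.

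\emph{Step 3: $L\log^\alpha L$, $\alpha\ge1$.} Here $\Phi(u)\sim u\ln^\alpha u$, so $\Phi(au)/\Phi(u)\le a(1+\ln a/\ln u)^\alpha\lesssim a^{1+\alpha}$. This is not the right bound for large $\alpha$, so use instead $(1+\ln a)^\alpha\lesssim a$, which gives $\Phi(au)\le K a^2\Phi(u)$, i.e.\ \eqref{eq1}. The conjugate is $\Phi'(u)\sim\exp(u^{1/\alpha})$ at infinity. Since $1/\alpha\le1$, we have $(u+v)^{1/\alpha}\le u^{1/\alpha}+v^{1/\alpha}$, and \eqref{eq2} follows; this is exactly where $\alpha\ge1$ is needed. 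The main obstacle is Step 2, namely correctly locating and citing the Orlicz criterion for the Kruglov property.
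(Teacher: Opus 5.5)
Your proposal is correct and follows the paper's route: reduce to Theorem~\ref{main1}(i) by checking that \eqref{eq1} gives $L_2\subset L_\Phi$ and a lower $2$-estimate, that $(L_\Phi)'=L_{\Phi'}$ has the Kruglov property by Kruglov's theorem under \eqref{eq2}, and that $u\ln^\alpha u$ with $\alpha\ge1$ satisfies both conditions. For the lower $2$-estimate you give a direct modular/$\Delta_2$ argument, where the paper simply cites a reference. In Step~2 you only need the sufficiency direction of Kruglov's theorem, \eqref{eq2}$\Rightarrow L_{\Phi'}\in\bbK$, which holds regardless of the growth of $\Phi'$; so drop the claimed ``iff'' and the case split through Braverman's theorem, whose dichotomy (``$L_{\Phi'}\subset L_p$ for all $p$'' versus ``polynomial growth'') is not exhaustive anyway.
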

    \begin{proof}
    First of all, inequality \eqref{eq1} guarantees that the space $L_\Phi$ satisfies a lower $2$-estimate
    \cite[Proposition on p.~118 and Theorem on p.~121]{KMP97}, and also that $L_\Phi\supset L_2$. Furthermore,
    $(L_\Phi)'=L_{\Phi'}$ (see, e.g., \cite[\S\,14]{KR}), and hence, by \eqref{eq2}, the space $(L_\Phi)'$ has the
    Kruglov property (see \cite{Kr70} or \cite[Theorem 10]{AS}). Thus, the first assertion of the corollary follows
    from Theorem~\ref{main1}.

    To prove the second assertion, it suffices to observe that the function $\Phi(u)$, equivalent to
    $u\ln^\alpha u$ for large $u$, satisfies condition \eqref{eq1}, while the
        conjugate function $\Phi'(u)$ in this case is
    equivalent to the function $e^{u^{1/\alpha}}$ \cite[Theorem 6.1]{KR}, and, as is easy to check, when
    $\alpha\ge 1$, it satisfies inequality \eqref{eq2}.

    \end{proof}

\begin{remark}
    \label{rem_kruglov}
    The condition $X'\in \bbK$ (respectively, the assumption that the function $\Phi'$ satisfies inequality
    \eqref{eq2}), in general, is not necessary in Theorem~\ref{main1} (respectively, in Corollary~\ref{cor1a}).
    We provide a corresponding example.

    As already mentioned in the proof of Corollary~\ref{cor1a}, if $\Phi(u)$ is
    an Orlicz function equivalent to the function $u\ln^\alpha u$ for large $u$, then
    $\Phi'(u)$ is equivalent to the function $e^{u^{p}}$,
    where $p=1/\alpha$. Clearly, $\Phi'$ does not satisfy condition \eqref{eq2}
    if $p>1$. Moreover, the
    exponential Orlicz space $\mathrm{Exp}L_p$, generated by the function $\Phi'$, does not possess the Kruglov property
    (see~Section~\ref{Prel4}). Nevertheless, it is known \cite[Theorem II.9]{Brav} that for $1<p\le 2$ the inequality
    \eqref{Bahr-Ess} still holds for $X'=\mathrm{Exp}L_p$. Therefore, by Theorem~\ref{main1}, the assertion of
    Corollary~\ref{cor1a} extends to the values $\alpha\ge 1/2$.
\end{remark}

    Finally, let us consider the Lorentz spaces.
    Since $L_{p,q}\in\bbK$ for all $1<p<\infty$ and $1\le q\le\infty$, $(L_{p,q})'=L_{p',q'}$ (where $p'$ and $q'$
    are the conjugate exponents to $p$ and $q$, respectively), $L_{p,q}\subset L_{p_1,q_1}$ for any
    $1<p_1<p<\infty$, $1\le q,q_1\le\infty$, and $L_{p,q}$ satisfies a lower $\max\{p,q\}$-estimate and an upper
    $\min\{p,q\}$-estimate (see, e.g., \cite{Creek} or \cite{D-01}), applying Theorem~\ref{main1}, we obtain

    \begin{cor}
    \label{Lor1}
    For any $1<p<2$, $1\le q\le 2$ there exists a constant $c_{p,q}>0$, depending only on $p$ and $q$, such that for
    any collection of mean zero independent functions $\{f_k\}_{k=1}^N$ from the space $L_{p,q}$, with  $\|f_k\|_{L_{p,q}}\ge 1$, $k=1,\dots,N$, and any $n<N$, we have
    $$
        d_n^\avg(\{f_1,\dots,f_N\},L_{p,q})_2\ge c_{p,q}(1-n/N)^{1/2}.
    $$
    \end{cor}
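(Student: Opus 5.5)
The plan is to apply Theorem~\ref{main1} under condition (i) with $X=L_{p,q}$, where $1<p<2$ and $1\le q\le 2$. Four things need to be checked: $X$ is separable, $X$ satisfies a lower $2$-estimate, $X\supset L_2$, and $X'\in\bbK$. Theorem~\ref{main1} then gives the bound $d_n^\avg(\{f_1,\dots,f_N\},L_{p,q})_2\ge (2B)^{-1}(1-n/N)^{1/2}$, with $B$ depending only on $X$, that is, only on $p$ and $q$. So $c_{p,q}=(2B)^{-1}$.

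The first three conditions are routine.

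\emph{Separability.} Since $q<\infty$, the space $L_{p,q}$ is separable: its norm is order continuous, because simple functions are dense whenever $q<\infty$. (Alternatively, separability follows from the lower estimate, exactly as in the proof of Theorem~\ref{th_intro_rigid}.)

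\emph{Lower $2$-estimate.} By the facts quoted just before the statement, $L_{p,q}$ satisfies a lower $\max\{p,q\}$-estimate. Here $\max\{p,q\}\le 2$, and a lower $r$-estimate with $r\le 2$ implies a lower $2$-estimate, since $\|\cdot\|_{\ell_2^N}\le\|\cdot\|_{\ell_r^N}$.

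\emph{Embedding.} Pick $p<p_0<2$. The quoted inclusion $L_{p_0,2}\subset L_{p,q}$ holds because $p<p_0$, and $L_2=L_{2,2}\subset L_{p_0,2}$ on $[0,1]$. Hence $L_2\subset L_{p,q}$. More directly, $L_2\subset L_{p,1}\subset L_{p,q}$, since $\int_0^1 x^*(t)t^{1/p-1}\,dt\le \|x\|_2\,\|t^{1/p-1}\|_{L_2}$ and $1/p-1>-1/2$ because $p<2$.

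The one point needing a moment of care is the Kruglov property of the associated space. We have $(L_{p,q})'=L_{p',q'}$ with $2<p'<\infty$ and $2\le q'\le\infty$. Note that $q'=\infty$ when $q=1$, so the dual is then non-separable. The text before the statement asserts $L_{p,q}\in\bbK$ for all $1<p<\infty$ and $1\le q\le\infty$, which covers $L_{p',q'}$ in every case. If one wants to justify this from Section~\ref{Prel4}, two routes are available.
\begin{itemize}
\item $L_{p',q'}$ is maximal, since it is an associated space. It also contains $L_{r}$ for any $r>p'$, by the quoted inclusion. So \cite[Theorem~I.2]{Brav} gives $L_{p',q'}\in\bbK$.
\item Alternatively, $\alpha_{L_{p',q'}}=1/p'>0$, and \cite[Theorem~I.3]{Brav} applies.
\end{itemize}
All conditions of Theorem~\ref{main1}(i) then hold, and the corollary follows immediately.
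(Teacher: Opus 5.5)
Your proposal is correct and follows the paper's route. The paper also obtains the corollary by applying Theorem~\ref{main1}(i) to $X=L_{p,q}$, using the lower $\max\{p,q\}$-estimate, the inclusion $L_2\subset L_{p,q}$ for $p<2$, and $(L_{p,q})'=L_{p',q'}\in\bbK$; your extra checks (separability for $q<\infty$, and the Kruglov property of $L_{p',q'}$ via maximality together with $L_r\subset L_{p',q'}$, or via $\alpha_{L_{p',q'}}=1/p'>0$) are valid supplements to facts the paper simply quotes.
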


    \vskip0.1cm

\subsection{Spaces without the (IR) property}

    As noted in Section~\ref{Prel_symm}, $L_1$ and $L_\infty$ are the largest and the smallest s.s., respectively,
    while $L_2$ lies ``in the middle'' of the $L_p$-scale.
    Theorem~\ref{th_intro_phi} (see the Introduction) shows that all spaces lying between $L_2$ and $L_\infty$ that are
    in a certain sense separated from $L_2$ and are not equal to $L_\infty$ fail to have the property (IR). The construction, used in the
    following proof, strengthens the one from \cite[Proposition 4.2]{M24}.

    \begin{proof}[Proof of Theorem~\ref{th_intro_phi}.]
        We intend to construct families of mean zero independent functions in the given space $X$ that are well approximated by low-dimensional subspaces.
        More precisely, for a fixed $\gamma\in(0,1)$ and for sufficiently large positive integer $N$, we will find mean zero independent functions
        $f_1,\ldots,f_N\in X$, $\|f_k\|_{X}\ge 1$, as well as approximating functions $g_1,\ldots,g_N$ from an $n$-dimensional subspace of $X$, where
        $n:=\lfloor N\gamma\rfloor+1$, such that $\|f_k-g_k\|_{X}\le\gamma$ for all $k$.
        This will yield the following required width estimate:
        \begin{equation}
            \label{orlicz_width_a}
        d_{\gamma N+1}(\{f_1,\ldots,f_N\},X) \le \gamma.
        \end{equation}

     To obtain \eqref{orlicz_width_a}, we will use the width estimate~\eqref{width_b2} for the Euclidean ball $B_2^N$ in $\ell_\infty^N$.
  Denoting by $V_n^*$ the corresponding extremal $n$-dimensional subspace, we get        $$
        \rho(B_2^N,V_n^*)_\infty = d_n(B_2^N,\ell_\infty^N) \le C(\gamma) N^{-1/2}.
        $$
Let $M>0$ and $s^*\in\mathbb N$ be the parameters whose values we will choose later. Now, we describe the construction of the families $\{f_k\}$ and $\{g_k\}$.
        As $f_1$ we take a function, defined on $[0,1]$, with the distribution
        $$
        m\{f_1=0\}=1-\eps,\;m\{f_1=M\}=m\{f_1=-M\}=\eps/2,
        $$
        where $\eps>0$ is determined by the normalization condition: $\|f_1\|_{X}=1$. Since $f_1$ is equimeasurable with the function $M\chi_{[0,\eps]}$, we obtain:
        \begin{equation}
            \label{orlicz_eps_a}
        M\cdot \phi_X(\eps)=1.
        \end{equation}
        We take $f_2,\ldots,f_N$ to be independent copies of $f_1$ and consider the vector-function
        $\vec{f}=(f_1,\ldots,f_N)$.
         
        Denote by $\mathcal A_s$ the set of those $t\in[0,1]$ for which the vector $\vec{f}(t)$ has exactly $s$
        non-zero coordinates.
        The approximating vector $\vec{g}=(g_1,\ldots,g_N)$ is defined, for $t\in\mathcal A_s$, $s\le s^*$, as the best $\ell_\infty^N$-approximation of the vector $\vec{f}$ in the subspace $V_n^*$:
        $$
        \vec{g}(t):=\arg\min_{y\in V_n^*}\|\vec{f}(t)-y\|_{\ell_\infty^N},
        $$
        and $\vec{g}(t):=0$ for $t\in\mathcal A_s$, $s>s^*$.
        Since the vector $\vec{g}$ takes values only in the $n$-dimensional subspace $V_n^*\subset\mathbb{R}^N$, the
        functions $g_1,\ldots,g_N$ lie in an $n$-dimensional subspace of $X$.
        Let us estimate the norms $\|f_k-g_k\|_{X}$, $k=1,2,\dots,N$, from above.
        
        Setting
        $$
        \mathcal B:=\bigcup_{s=0}^{s^*} \mathcal A_s
        \quad\text{and}\quad
        \mathcal C:=\bigcup_{s=s^*+1}^N \mathcal A_s,
        $$
        we obtain
        \begin{equation}
            \label{orlicz_xi_eta_a}
            \|f_k-g_k\|_{X}\le \|(f_k-g_k)\chi_{\mathcal B}\|_{X}+\|(f_k-g_k)\chi_{\mathcal C}\|_{X}.
        \end{equation}
        We estimate the terms on the right-hand side of this inequality separately.

        Let $t\in\mathcal A_s$, $s\le s^*$. Then
        $$
        \|\vec{f}(t)\|_{\ell_2^N} \le Ms^{1/2},
        \quad \|\vec{f}(t)-\vec{g}(t)\|_{\ell_\infty^N} \le Ms^{1/2} C(\gamma) N^{-1/2},
        $$
       and hence, since $\|\chi_{[0,1]}\|_X=1$ (see Section~\ref{Prel_symm}), we have
       $$
       \|(f_k-g_k)\chi_{\mathcal B}\|_{X}\le M\sqrt{s^{*}} C(\gamma) N^{-1/2}.$$
      Consequently, for every $M>0$ and positive integer $N$, setting
       \begin{equation}
       \label{param_s}
           s^{*}:=\left\lfloor \frac{N\gamma^2}{4C(\gamma)^2M^2} \right\rfloor,
\end{equation}        
    we obtain
      \begin{equation}
       \label{first est}
          \|(f_k-g_k)\chi_{\mathcal B}\|_{X}\le \frac{\gamma}2.
\end{equation}

        Next, for $s>s^*$, from  relations~\eqref{orlicz_eps_a} and~\eqref{param_s} it follows that
        $$
        m(\mathcal A_s)
        \le \binom{N}{s}\eps^s \le (eN/s)^s \eps^s \le (eN\eps/s^*)^s
        \le \left(\frac{C_1(\gamma)N\eps M^{2}}{N}\right)^s
            =\left(\frac{C_1(\gamma)\eps}{\phi_X(\eps)^2}\right)^s.
        $$
       Moreover, by the assumption $\varlimsup_{t\to0} \phi_X(t)t^{-1/2}=\infty$,
     one can find $\eps>0$ such that
        \begin{equation*}
       \label{second est}
            \frac{C_1(\gamma)\eps}{\phi_X(\eps)^2}\le\frac12.
\end{equation*}
       From this stage we fix $\eps$ satisfying the last inequality and also fix $M$, for which equality~\eqref{orlicz_eps_a} holds. Then
        we obtain that $m(\mathcal A_s)\le 2^{-s}$ for $s>s^*$, whence $m(\mathcal C)\le 2^{-s^*}$.
        Thus, by the trivial estimate $\|\vec{f}(t)-\vec{g}(t)\|_{l_\infty^N} \le M$, $t\in\mathcal C$, we have
        $$
        \|(f_k-g_k)\chi_{\mathcal C}\|_{X}\le M\phi_X(2^{-s^*}).
        $$
       
       It is easy to verify (see also \cite[Lemma 2.3]{Abook}) that the condition $X\ne L_\infty$ implies that
       $\lim_{t\to 0}\phi_X(t)=0$. By~\eqref{param_s}, the parameter $s^*$ can be made arbitrarily large by taking $N$
       large enough, and hence one can guarantee that the inequality
       $$
       M\phi_X(2^{-s^*})\le \gamma/2
       $$
       holds. Then, by the previous inequality,
       $$
        \|(f_k-g_k)\chi_{\mathcal C}\|_{X}\le \frac{\gamma}2.
        $$
        Conbining this inequality together with estimates \eqref{first est} and \eqref{orlicz_xi_eta_a}, we get \eqref{orlicz_width_a}. 
        Finally, since $\gamma$ is arbitrary, $X\not\in(\mathrm{IR})$ (see the Introduction).
    \end{proof}
    
   \begin{cor}
   \label{ass space}
   There exists an s.s. $X$ such that $X\not\in\IR$ and $X'\not\in\IR$.
   \end{cor}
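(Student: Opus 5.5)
The plan is to apply Theorem~\ref{th_intro_phi} twice, once to $X$ and once to $X'$, for a suitably chosen space $X$. The key identity is that for every s.s. $X$ on $[0,1]$ one has $\phi_X(t)\,\phi_{X'}(t)=t$ (see \cite[\S\,II.4]{KPS}). Hence
$$
\frac{\phi_X(t)}{t^{1/2}}\cdot\frac{\phi_{X'}(t)}{t^{1/2}}=1 ,
$$
and it suffices to find an s.s. $X$ with the following two properties:
$$
\varlimsup_{t\to0+}\phi_X(t)t^{-1/2}=\infty, \qquad \varliminf_{t\to0+}\phi_X(t)t^{-1/2}=0 .
$$
The second condition is exactly $\varlimsup_{t\to0+}\phi_{X'}(t)t^{-1/2}=\infty$. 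We must also ensure $X\ne L_\infty$ and $X'\ne L_\infty$. This holds as soon as $\phi_X(0+)=0$ and $\phi_{X'}(0+)=\lim_{t\to0+}t/\phi_X(t)=0$, and both limits follow from the oscillation, since $\phi_X(t)\le C t^{1/2}\psi(t)$ and $t/\phi_X(t)= t^{1/2}/\psi(t)$ with $\psi$ growing at most subpolynomially (see below).

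To construct $\phi$, write $\phi(t)=t^{1/2}\psi(t)$ and pass to the variable $u=\ln(1/t)$. A positive increasing function $\phi$ is quasi-concave ($\phi(t)/t$ non-increasing) precisely when $d\ln\phi/d\ln t\in[0,1]$. In terms of $\lambda(u):=\ln\psi(e^{-u})$ this reads $|\lambda'(u)|\le 1/2$. I would take
$$
\lambda(u)=\tfrac14\sqrt u\,\sin\sqrt u
$$
for $u\ge u_0$. Its derivative equals $\tfrac18\bigl(u^{-1/2}\sin\sqrt u+\cos\sqrt u\bigr)$, so its absolute value is at most $1/4$ for large $u$. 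Moreover, $\lambda$ takes arbitrarily large positive and negative values, so $\psi$ has $\varlimsup=\infty$ and $\varliminf=0$ as $t\to0+$. Since $|\lambda(u)|\le \tfrac14\sqrt u$, we get $\psi(t)=t^{o(1)}$, which gives $\phi(0+)=0$ and $\phi(t)/t\to\infty$.

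On $[t_0,1]$ I would extend $\phi$ linearly or by a constant so that it stays increasing and quasi-concave. Then I would replace $\phi$ by its least concave majorant $\tilde\phi$. This satisfies $\phi\le\tilde\phi\le2\phi$, so the oscillation properties are preserved.

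Finally, let $X=\Lambda(\tilde\phi)$ be the Lorentz space with norm $\int_0^1 x^*(t)\,d\tilde\phi(t)$. It is an s.s. with $\phi_X=\tilde\phi$, and $X'$ is the Marcinkiewicz space with fundamental function $t/\tilde\phi(t)$; alternatively, the identity $\phi_X\phi_{X'}=t$ gives this directly. Neither space equals $L_\infty$. Theorem~\ref{th_intro_phi} applies to $X$ and to $X'$; it does not require separability, so the possibly nonseparable Marcinkiewicz space is fine. Hence $X\notin\IR$ and $X'\notin\IR$.

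The main obstacle is the construction of the oscillating quasi-concave $\phi$. The derivative bound $|\lambda'|\le1/2$ must hold while $\lambda$ oscillates with unbounded amplitude, and the $\sqrt u\sin\sqrt u$ choice handles this by making the oscillation slow in $u$. The remaining steps are standard facts about Lorentz and Marcinkiewicz spaces.
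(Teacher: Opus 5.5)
Your proof is correct and follows the same route as the paper. Both arguments apply Theorem~\ref{th_intro_phi} to $X$ and to $X'$ via the identity $\phi_{X'}(t)=t/\phi_X(t)$, choosing $\phi_X$ so that $\phi_X(t)t^{-1/2}$ has $\varlimsup=\infty$ and $\varliminf=0$. The only difference is how the oscillating quasi-concave function is built: the paper linearly interpolates $t^{1/2}\ln(e^2/t)$ at a rapidly decreasing sequence $t_n$, which is automatically concave, whereas your explicit $t^{1/2}\exp\bigl(\tfrac14\sqrt{u}\sin\sqrt{u}\bigr)$, $u=\ln(1/t)$, needs the log-derivative bound and a least concave majorant, and you handle both correctly.
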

   
  \begin{proof}
  It is easy to verify that the function $\psi_0$ defined by the relations:
  $\psi_0(t)=t^{1/2}\ln(e^2/t)$, if $0<t\le 1$, and $\psi_0(0)=0$, is increasing and concave on $[0,1]$.
  A sequence of numbers $\{t_n\}_{n=0}^\infty$ satisfying the conditions
  $1=t_0>t_1>t_2>\dots>0$ and $\lim_{n\to\infty}t_n=0$ will be chosen later. We define a continuous function $\psi$ on
  $[0,1]$ as follows: $\psi(t_n)= \psi_0(t_n)$, $n=0,1,\dots$, and if $t_{n+1}<t\le t_n$, $n=0,1,\dots$, then $\psi(t)$
  is linear.
  If $t_n\to 0$ sufficiently fast, then there exist points $\tau_n\in (t_{n+1},t_n)$, $n=0,1,\dots$, such that
\begin{equation}
\label{ass1}
\lim_{n\to\infty}\psi(\tau_n)\tau_n^{-1/2}=0.
\end{equation}

Clearly, $\psi(t)$ is increasing, $\psi(t)/t$ is decreasing on $[0,1]$, and $\psi(0)=0$.
Consequently (see \cite[Theorem~II.4.7]{KPS}), $\psi$ is the fundamental function of some s.s. $X$. Moreover, since by
definition
$$
\lim_{n\to\infty}\psi(t_n)t_n^{-1/2}=\lim_{n\to\infty}\ln(e^2/t_n)=\infty,
$$
applying Theorem~\ref{th_intro_phi}, we obtain $X\not\in\IR$.

On the other hand, if $X'$ is the space associated to $X$, then $\phi_{X'}(t)=t/\psi(t)$ (see \cite[equality
(4.39)]{KPS}). Hence, from \eqref{ass1} it follows that
$$
\lim_{n\to\infty}\phi_{X'}(\tau_n)\tau_n^{-1/2}=
\lim_{n\to\infty}\frac{\tau_n}{\psi(\tau_n)\tau_n^{1/2}}=
\lim_{n\to\infty}\frac{\tau_n^{1/2}}{\psi(\tau_n)}=\infty.
$$
Thus, $X'\not\in\IR$ again by Theorem~\ref{th_intro_phi}.
   \end{proof}
   
   \begin{probl}
   \label{pr1}
 Suppose $X$ is such that $X\in\IR$ and $X'\in\IR$. Is it true that then $X=L_2$ (up to  equivalence of norms)?
    \end{probl}

    Let us consider a more general situation where $X$ is a quasi-Banach symmetric space, in which, in contrast to a (Banach) s.s. (see Section~\ref{Prel_symm}), the metric is defined by a quasi-norm that satisfies the quasi-triangle inequality with some constant $C\ge 1$. For the definition, properties, and examples of such spaces, see, e.g., \cite{ASZ-14}.

    The following statement uses definitions and notation from Section~\ref{Prel3}.

    \begin{theor}
    \label{th_LFR}
    Suppose that a quasi-Banach s.s. $X$ on $[0,1]$ has the Kruglov property and
    $\LFR(X)\cap (2,\infty]\ne\emptyset$.

    Then there exist $\delta=\delta(X)>0$ and a constant $C=C(X)>0$ such that for each $N\in\mathbb{N}$ one can find
    a family of symmetrically distributed independent functions $\{f_k\}_{k=1}^N$, $\|f_k\|_X=1$, $k=1,\dots,N$, 
    such that $n\ge N^{1-\delta}$ and
    $$
    d_n(\{f_1,\dots,f_N\},X)\le C N^{-\delta}.
    $$

    In particular, the assertion holds if $0<\alpha_X<1/2$.
    \end{theor}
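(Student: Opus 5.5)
Fix $q\in\LFR(X)\cap(2,\infty]$ and $N$. The plan is to transfer the octahedron width estimate \eqref{width_b1a} in $\ell_q^N$ to $X$, with the disjoint $\ell_q$-copies supplied by rough lattice finite representability and the Kruglov property controlling sums of independent functions by disjoint sums.

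\textbf{Step 1 (choice of functions).} By rough lattice finite representability, for each $m$ there are pairwise disjoint $x_1,\ldots,x_m\in X$ satisfying \eqref{first} with constant $K$. I would rather use them only to produce a single function whose "rearranged disjoint copies" span $\ell_q$. The intended picture is a symmetrically distributed $f$, and the vector function $\vec f=(f_1,\ldots,f_N)$ of independent copies of $f$, such that the map $a\mapsto\|\sum a_kf_k\|_X$ behaves like the $\ell_q^N$-norm. Write the linear map $T\colon\mathbb R^N\to X$, $Te_k=f_k$. The key claim is an upper bound $\|Ta\|_X\le C\|a\|_{\ell_q^N}$ on vectors $a$ arising from the approximation error. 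Then \eqref{width_T} together with \eqref{width_b1a} gives $d_n(\{f_k\},X)\le C\,d_n(B_1^N,\ell_q^N)\le CN^{-\delta}$ for $n\ge N^{1-\delta}$. In the quasi-Banach setting \eqref{width_T} still holds for a bounded linear $T$, since $T$ maps an $n$-dimensional subspace into one.

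\textbf{Step 2 (upper estimate via Kruglov).} By \eqref{main ineq1} (valid in the quasi-Banach setting by \cite{ASZ-14}), $\|\sum a_kf_k\|_X\le C(X)\|\sum a_k\bar f_k\|_{Z_X^2}$. So I need $f$ with mean zero, $\|f\|_X=1$, such that for disjoint copies $\|\sum a_k\bar f_k\|_{Z_X^2}\lesssim\|a\|_{\ell_q}$. To get this, take $f$ supported on a set of small measure $\le 1/N$, built from the lattice copies: let $f$ be a symmetrized, rescaled (via dilation and the symmetry of $X$) version of a normalized element $x\in X$ of small support, so that the $N$ disjoint copies $\bar f_k$ together fit in measure $\le1$. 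Then the $L_2[1,\infty)$ tail of $Z_X^2$ vanishes, and $\sum a_k\bar f_k$ is equimeasurable with a disjoint sum inside $[0,1]$. What remains is to realize the $\ell_q$ upper estimate for equimeasurable disjoint pieces. The natural device is that the $x_k$ in \eqref{first} can be replaced, by symmetry and the result of \cite{Shepp}/\cite[2.b.6]{LT}, by disjoint equimeasurable translates $\sigma$-dilations of one function; this is what the proof of $1/\alpha_X\in\LFR(X)$ actually produces, and it also covers the "in particular" clause, since $0<\alpha_X<1/2$ gives $q=1/\alpha_X>2$ and $\alpha_X>0$ gives $X\in\bbK$.

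\textbf{Main obstacle.} Step 2 is the crux: turning abstract disjoint $\ell_q$-copies into copies of a \emph{single} distribution, as independence of identically distributed $f_k$ requires. I would first try to use the equimeasurable construction directly, arguing that $\ell_q$ is lattice finitely representable via block-disjoint dilates $\sigma_\tau$ of one function. If that fails, a fallback is to allow non-identically distributed $f_k$, taking $f_k$ to be an independent symmetrically distributed function equimeasurable with $x_k$ (suitably shrunk), normalized by $\|f_k\|_X=\|x_k\|_X\asymp1$. The Kruglov comparison \eqref{main ineq1} then applies without identical distribution and gives $\|\sum a_kf_k\|_X\le C\|\sum a_k\bar f_k\|_{Z_X^2}\le C'\|\sum a_kx_k\|_X\le C'K\|a\|_{\ell_q}$. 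The last step requires controlling the $Z_X^2$ norm of disjoint functions of total support $\le1$ by their disjoint realization in $X$, which holds by symmetry. Finally, rescale so that $\|f_k\|_X=1$ exactly.
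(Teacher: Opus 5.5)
Your fallback argument is exactly the paper's proof. You take independent symmetrically distributed $f_k$ with $|f_k|$ equimeasurable with the disjoint $x_k$ from \eqref{first}. Then $\sum a_k\bar f_k$ is equimeasurable with $\sum a_kx_k$, so \eqref{main ineq1} gives $\|T\|_{\ell_q^N\to X}\le KC(X)$, and \eqref{width_T}, \eqref{width_b1a} together with renormalization via $\|x_k\|_X\ge K^{-1}$ finish the argument.

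The ``main obstacle'' you worry about does not exist: independence does not require identical distribution, and the theorem does not ask for it. No shrinking is needed either, since disjoint functions on $[0,1]$ already have total support of measure at most $1$; shrinking would only spoil $\|f_k\|_X=\|x_k\|_X$.
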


    \begin{proof}
    By the hypothesis, there exists $q>2$ such that for each $N\in\mathbb{N}$, $X$ contains pairwise disjoint non-negative
    functions $x_k\in X$, $k=1,2,\dots,N$, for which inequality~\eqref{first} holds.
        In particular, the norms of $x_k$ in $X$ are bounded away from zero: $\|x_k\| \ge K^{-1}$,  $k=1,2,\dots,N$.
     
    Next, let the functions $f_k$ be symmetrically distributed, independent, and such that $|f_k|$ has the same
    distribution as the function $x_k$ for each $k=1,\dots,N$. Then, since the space $X$ has the Kruglov property, by
    inequality \eqref{main ineq1} (see also Theorem~21 in \cite{ASZ-14}),
    $$
    \Big\|\sum_{k=1}^N a_kf_k\Big\|_X\le C(X)\Big\|\sum_{k=1}^N a_kx_k\Big\|_X.$$
    From this and \eqref{first} it follows that
        \begin{equation}
            \label{fk_lq}
            \Big\|\sum_{k=1}^N a_kf_k\Big\|_X\le KC(X)\|(a_k)\|_{\ell_q^N}
        \end{equation}
        for all $N\in\mathbb{N}$ and $a_k\in\mathbb{R}$.

        Consider the linear operator $T\colon \ell_q^N\to X$ defined by the equalities $Te_k = f_k$, $k=1,\ldots,N$.
        Inequality~\eqref{fk_lq} means that $\|T\|\le KC(X)$. Applying relation~\eqref{width_T} and the width estimate~\eqref{width_b1a} for the octahedron $B_1^N$, we get
        \begin{multline*}
        d_n(\{f_1,\ldots,f_N\}, X)
        \le \|T\|\cdot d_n(\{e_1,\ldots,e_N\}, \ell_q^N) = \\
        = \|T\|\cdot d_n(B_1^N, \ell_q^N)
            \le KC(X)C(q)N^{-\delta},
            \quad \text{for}\;\;n\ge N^{1-\delta},\;\delta=\delta(q).
        \end{multline*}
        Finally, we can pass from $\{f_k\}$ to the normalized functions $\{f_k/\|f_k\|\}$; since
        $\|f_k\|=\|x_k\|\ge K^{-1}$, this increases the width by at most a factor depending only on $K$.
        This proves the first assertion of the theorem.
        
   Since the condition $\alpha_X>0$ guarantees that the space $X$ has the Kruglov property
   (see~Section~\ref{Prel4}), and $1/\alpha_X\in\LFR(X)\cap (2,\infty)$ (see~Section~\ref{Prel3}), the proof of the theorem is complete.
    \end{proof}

    In particular, for quasi-Banach Lorentz spaces we obtain the following result.

    \begin{cor}
    \label{Lor3}
        Let $0<p<\infty$, $0<q\le\infty$. If $\max\{p,q\}>2$, then there exist $\delta=\delta(p,q)>0$ and a constant
        $C=C(p,q)$ such that for each $N\in\mathbb{N}$ one can find a family of symmetrically distributed independent
        functions $\{f_k\}_{k=1}^N$, $\|f_k\|_{p,q}=1$, $k=1,\dots,N$, such that
        for $n\ge N^{1-\delta}$ we have
    $$
    d_n(\{f_1,\dots,f_N\},L_{p,q})\le CN^{-\delta}.
    $$
    \end{cor}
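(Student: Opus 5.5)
This corollary will follow from Theorem~\ref{th_LFR}, applied to the quasi-Banach Lorentz space $L_{p,q}$. We need two facts: $L_{p,q}$ has the Kruglov property, and $\LFR(L_{p,q})\cap(2,\infty]\ne\emptyset$ whenever $\max\{p,q\}>2$. Since Theorem~\ref{th_LFR} is stated for quasi-Banach symmetric spaces, we may work with the quasi-norm $\|\cdot\|_{p,q}$ directly. We normalize by $\|f_k\|_{p,q}=1$, which is what the theorem provides.

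\textbf{Kruglov property.} For $0<p<\infty$ we have $L_{p,q}\supset L_r$ for any $r>p$, for instance $r=2p$. The quasi-Banach analogue of Braverman's criterion (see \cite{ASZ-14}) then gives $L_{p,q}\in\bbK$: a maximal symmetric space containing some $L_r$ with $r<\infty$ has the Kruglov property. Alternatively, one can use that the lower Boyd index $\alpha_{L_{p,q}}=1/p$ is positive.

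\textbf{The $\LFR$ condition.} We split into two cases.

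\emph{Case $p>2$.} The Boyd indices of $L_{p,q}$ both equal $1/p$, so $\alpha_X=1/p\in(0,1/2)$. The final sentence of Theorem~\ref{th_LFR} (its quasi-Banach version, via $1/\alpha_X\in\LFR(X)$) applies directly.

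\emph{Case $q>2$.} We will show $q\in\LFR(L_{p,q})$ by an explicit construction; this is the step requiring real work. Take disjoint functions $x_k$ supported on dyadic-scale level sets, $x_k=2^{k/p}\chi_{\Delta_k}$, where the $\Delta_k$ are disjoint sets with $m(\Delta_k)\asymp 2^{-k}$. The function $\sum a_k x_k$ is essentially a step function whose values on scale $2^{-k}$ are $|a_k|2^{k/p}$. Its $L_{p,q}$ quasi-norm is comparable to
$$\Big(\sum_k |a_k|^q\, 2^{kq/p}\, 2^{-kq/p}\Big)^{1/q}=\|(a_k)\|_{\ell_q}.$$
This is the standard discretization of the Lorentz quasi-norm. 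The genuine obstacle is that the decreasing rearrangement mixes the blocks when the $|a_k|$ are not monotone. To control this we use lacunary scales: take $m(\Delta_k)=2^{-Lk}$ with $L$ large depending on $p,q$. The contribution of block $k$ to the rearrangement then lives essentially on $[\sum_{j>k}m(\Delta_j),\,\sum_{j\ge k}m(\Delta_j)]$, an interval comparable to $[0,2^{-Lk}]$ up to a constant. The two-sided estimate then follows from the quasi-triangle inequality and the monotonicity of $x^*$, with constants depending only on $p,q$.

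Having verified both hypotheses, Theorem~\ref{th_LFR} yields the required $\delta(p,q)$ and $C(p,q)$.
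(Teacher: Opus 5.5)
Your overall plan matches the paper's: the Kruglov property of $L_{p,q}$ plus $\LFR(L_{p,q})\cap(2,\infty]\ne\emptyset$, then Theorem~\ref{th_LFR}. The paper simply quotes $p,q\in\LFR(L_{p,q})$ from \cite{D-01}. You instead treat $p>2$ via Boyd indices and try to prove $q\in\LFR(L_{p,q})$ by hand. That step, which you rightly call the real work, is not correctly justified.

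With $s_k:=m(\Delta_k)=2^{-Lk}$, $x_k=2^{Lk/p}\chi_{\Delta_k}$ and $c_k:=|a_k|2^{Lk/p}$, block $k$ occupies an interval comparable to $[0,s_k]$ in the rearrangement only if the $c_k$ increase in $k$. Lacunarity does not enforce this. Take $N=2$, $a_1=1$, $a_2=2^{-2L/p}$. Then $c_2=1<c_1$, so block $2$ sits on $[2^{-L},2^{-L}+2^{-2L}]$. For $q>p$ its term $c_2^q\bigl((s_1+s_2)^{q/p}-s_1^{q/p}\bigr)$ in the rearrangement integral exceeds $|a_2|^q$ by a factor at least $(q/p)\,2^{L(q/p-1)}$. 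So taking $L$ large makes the blockwise discrepancy worse, not better. The quasi-triangle inequality cannot rescue this either. Iterated over $N$ summands it gives constants growing with $N$, or at best (via Aoki--Rolewicz) an $\ell_s$-bound with $s\le1$. It cannot yield the uniform bound $\|\sum a_kx_k\|_{p,q}\le K\|(a_k)\|_{\ell_q}$.

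The construction itself is correct; lacunarity just has to be used through the distribution function rather than through positions in the rearrangement. Write $\|x\|_{p,q}^q=q\int_0^\infty\lambda^{q-1}\mu_x(\lambda)^{q/p}\,d\lambda$, where $\mu_x(\lambda)=\sum_{k:\,c_k>\lambda}s_k$. For every index set $S$, geometric decay gives $\max_{k\in S}s_k\le\sum_{k\in S}s_k\le 2\max_{k\in S}s_k$. Likewise, $\max_{k\in S}s_k^{q/p}\le\sum_{k\in S}s_k^{q/p}\le(1-2^{-Lq/p})^{-1}\max_{k\in S}s_k^{q/p}$. Hence $\mu_x(\lambda)^{q/p}\asymp\sum_{k:\,c_k>\lambda}s_k^{q/p}$, with constants depending only on $p,q$; any fixed $L\ge1$ suffices. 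Integrating in $\lambda$ gives $\|\sum a_kx_k\|_{p,q}^q\asymp\sum_k c_k^q s_k^{q/p}=\sum_k|a_k|^q$. This holds uniformly in $N$ and regardless of how the $c_k$ are ordered. For $q=\infty$, the same comparison in $\|x\|_{p,\infty}=\sup_\lambda\lambda\mu_x(\lambda)^{1/p}$ gives $\max_k|a_k|$. With this repair your route is a self-contained proof of $q\in\LFR(L_{p,q})$, replacing the citation of \cite{D-01}.

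One caveat remains. For $p>2$, $q<1$ the space is not normable, so your Boyd-index step relies on a quasi-Banach form of $1/\alpha_X\in\LFR(X)$. The paper states Theorem~\ref{th_LFR} for quasi-Banach spaces, but it justifies this ingredient only for Banach s.s.\ via \cite{LT}. The paper's own proof of this corollary avoids the issue by taking $p\in\LFR(L_{p,q})$ directly from \cite{D-01}.
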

    \begin{proof}
    Since the space $L_{p,q}$ has the Kruglov property for all $0<p<\infty$ and $0< q\le\infty$ \cite{ASZ-14}, the
        assertion follows from Theorem~\ref{th_LFR} and the fact that $p,q\in \LFR(L_{p,q})$
        (see \cite[Proposition 1 and Theorem 6]{D-01}).
    \end{proof}


    \begin{remark}
        It is easy to show that $\LFR(L_{p,q})\subset [\min\{p,q\},\max\{p,q\}]$. Moreover, if $1\le p\le q$, then
        $\LFR(L_{p,q})=\{p,q\}$ (see~\cite{A-23}).
    \end{remark}

   \begin{probl}
    \label{pr2}
       Does the space $L_{2,q}$ for $q<2$ has the (IR) property? Although this
     space satisfies a lower $2$-estimate, it does not contain $L_2$. Equivalently, since $q'=q/(q-1)>2$, we have:
     $(L_{2,q})'=L_{2,q'}\not\subset L_2$. Consequently,
    inequality \eqref{Bahr-Ess} fails for $X=L_{2,q}$ if $q<2$ \cite[Corollary 40]{ASZ-14}, and Theorem~\ref{main1} is not
    applicable in this case. On the other hand, one can easily see that the hypothesis of the last corollary also does not hold.
    \end{probl}

\begin{cor}
Let $\Phi$ be an Orlicz function such that for some $\eps,K_1,K_2>0$ the following conditions hold:
    \begin{equation}
        \label{orlicz_eq1a}
        \Phi(au)\ge K_1 a^{2+\eps}\Phi(u),\quad \forall\,a,u\ge 1,
    \end{equation}
\begin{equation}
\label{orlicz_eq2a}
\Phi(u+v)\le K_2 \Phi(u)\Phi(v),\quad \forall\,u,v\ge 1.
\end{equation}
    Then there exist $\delta>0$ and $C>0$, depending on $\Phi$, such that for each $N\in\mathbb{N}$ one can find
    a family of symmetrically distributed independent functions $\{f_k\}_{k=1}^N$, $\|f_k\|_{L_\Phi}=1$,
    $k=1,\dots,N$, such that for all $n\ge N^{1-\delta}$ it holds
    $$
    d_n(\{f_1,\dots,f_N\},L_\Phi)\le CN^{-\delta}.$$
    \end{cor}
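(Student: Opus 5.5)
The corollary will follow from Theorem~\ref{th_LFR}, so I only need to check its two hypotheses for $X=L_\Phi$: that $L_\Phi$ has the Kruglov property, and that $\LFR(L_\Phi)\cap(2,\infty]\ne\emptyset$.

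\emph{Kruglov property.} Condition \eqref{orlicz_eq2a} is exactly the submultiplicativity condition for $\Phi$ (with $u,v\ge1$). By the criterion of Kruglov \cite{Kr70}, also stated as \cite[Theorem 10]{AS} and already used in the proof of Corollary~\ref{cor1a} for $\Phi'$, it gives $L_\Phi\in\bbK$. Since $L_\Phi$ depends only on the behaviour of $\Phi$ for large arguments, restricting to $u,v\ge1$ costs nothing.

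\emph{Lattice finite representability.} The shortest route is through the Boyd index. Condition \eqref{orlicz_eq1a} gives $\Phi(au)/(\Phi(u)a^p)\ge K_1a^{2+\eps-p}$. For $p<2+\eps$ this is unbounded in $a$, so such $p$ are excluded from the set in \eqref{index}. Hence
\[
\alpha_{L_\Phi}\le \frac{1}{2+\eps}<\frac12.
\]
If in addition $\alpha_{L_\Phi}>0$, then the final clause of Theorem~\ref{th_LFR} applies directly, since $1/\alpha_{L_\Phi}\in\LFR(L_\Phi)\cap(2,\infty)$.

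\emph{The main obstacle: the case $\alpha_{L_\Phi}=0$.} This case is not excluded by the hypotheses; for example, $\Phi$ could grow like $e^{u^r}$ with $r\le1$, which is submultiplicative. Here I would use the cited fact \cite[Theorem~2.b.6]{LT} that $1/\alpha_X\in\LFR(X)$ for every s.s., read with $1/0=\infty$. Thus $c_0$, i.e.\ $q=\infty$, is roughly lattice finitely representable in $L_\Phi$, and $\infty\in\LFR(L_\Phi)\cap(2,\infty]$.

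Theorem~\ref{th_LFR} explicitly allows $q=\infty$. The width estimate \eqref{width_b1a} still holds for $\ell_\infty^N$, because $d_n(B_1^N,\ell_\infty^N)\le d_n(B_2^N,\ell_\infty^N)$ and \eqref{width_b2_precise} applies. So the theorem gives the required $\delta$ and $C$ in this case too, and both cases together complete the proof.

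If one prefers not to rely on the endpoint $\alpha_X=0$, there is an alternative. Since $\sup\LFR(X)\in\LFR(X)$ \cite{Shepp}, it suffices to show $\sup\LFR(L_\Phi)>2$. Indeed, a lower-$2$-estimate-type argument shows that $\LFR(L_\Phi)\subset[1,2]$ would force $\Phi(au)\lesssim a^{2+o(1)}\Phi(u)$, which contradicts \eqref{orlicz_eq1a}. Either way, the conclusion follows from Theorem~\ref{th_LFR}.
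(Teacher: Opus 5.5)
Your proposal is correct and follows the paper's own argument. The paper gets the Kruglov property from the submultiplicativity \eqref{orlicz_eq2a}, then uses \eqref{index} to get $\alpha_{L_\Phi}<1/2$, then $1/\alpha_{L_\Phi}\in\LFR(L_\Phi)$, read as $q=\infty$ when $\alpha_{L_\Phi}=0$ (the paper remarks this covers e.g.\ $\Phi(u)=e^u-1$), and finally applies Theorem~\ref{th_LFR}; your explicit check that \eqref{width_b1a} also holds for $q=\infty$ via \eqref{width_b2_precise} is a detail the paper leaves implicit.
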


    \begin{proof}
     First, an Orlicz space that satisfies condition~\eqref{orlicz_eq2a} has the Kruglov property.
        According to formula~\eqref{index}, condition~\eqref{orlicz_eq1a} is equivalent to the fact that the lower Boyd
        index $\alpha_{L_\Phi}$ is less than $1/2$. Therefore, since $1/\alpha_{L_\Phi}\in\LFR(L_\Phi)$ (see~Section~\ref{Prel3}), the required result follows from Theorem~\ref{th_LFR}.
    \end{proof}

    Note that this corollary applies also to some Orlicz spaces with $\alpha_{L_\Phi}=0$, for example, if
    $\Phi(u)=e^u-1$.

    Thus, $L_\Phi\not\in(\mathrm{IR})$ if conditions~\eqref{orlicz_eq1a} and~\eqref{orlicz_eq2a} hold.
    From Theorem~\ref{th_intro_phi} we obtain this conclusion under a substantially weaker condition.

   \begin{cor}
        \label{cor_orlicz}
        Let $\Phi$ be an Orlicz function such that
        $$
        \varlimsup_{u\to\infty}\Phi(u)u^{-2} = \infty.
        $$
        Then $L_\Phi\not\in(\mathrm{IR})$.
    \end{cor}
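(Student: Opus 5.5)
We reduce the statement to Theorem~\ref{th_intro_phi}. The whole proof is a computation of the fundamental function of $L_\Phi$ and a check that the hypotheses of that theorem hold.

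First, $L_\Phi\ne L_\infty$. Since $\Phi$ is finite and continuous on $[0,\infty)$, we have $\Phi(\lambda^{-1})\,t\le 1$ for suitable $\lambda$, so $\phi_{L_\Phi}(t)\to0$ as $t\to0+$. This already rules out $L_\Phi=L_\infty$ (for $L_\infty$ the fundamental function is identically $1$).

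Next, we compute $\phi_{L_\Phi}$. By the definition of the Luxemburg norm,
$$
\phi_{L_\Phi}(t)=\|\chi_{(0,t]}\|_{L_\Phi}=\inf\{\lambda>0\colon t\,\Phi(1/\lambda)\le1\}=\frac{1}{\Phi^{-1}(1/t)},
$$
where $\Phi^{-1}$ is the inverse function, which is well defined because $\Phi$ is strictly increasing and continuous. Substitute $t=1/\Phi(u)$, which tends to $0$ as $u\to\infty$. Then
$$
\phi_{L_\Phi}(t)\,t^{-1/2}=\frac{\Phi(u)^{1/2}}{u}=\bigl(\Phi(u)u^{-2}\bigr)^{1/2}.
$$

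Take a sequence $u_j\to\infty$ with $\Phi(u_j)u_j^{-2}\to\infty$, and set $t_j:=1/\Phi(u_j)\to0$. Along this sequence $\phi_{L_\Phi}(t_j)t_j^{-1/2}\to\infty$, so
$$
\varlimsup_{t\to0+}\phi_{L_\Phi}(t)t^{-1/2}=\infty.
$$
Theorem~\ref{th_intro_phi} then gives $L_\Phi\notin\IR$.

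Only one point needs care: the normalization $\|\chi_{[0,1]}\|_X=1$ assumed in Section~\ref{Prel_symm}. It holds here because $\Phi(1)=1$, which makes $\phi_{L_\Phi}(1)=1$. In any case, renormalizing by a constant factor changes neither the $\varlimsup$ condition nor membership in $\IR$. There is no real obstacle beyond this.
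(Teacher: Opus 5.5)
Your proof is correct and follows the paper's argument: compute $\phi_{L_\Phi}(t)=1/\Phi^{-1}(1/t)$, use the substitution $t=1/\Phi(u)$ to turn the hypothesis into $\varlimsup_{t\to0+}\phi_{L_\Phi}(t)t^{-1/2}=\infty$, and apply Theorem~\ref{th_intro_phi}. You also explicitly check that $L_\Phi\ne L_\infty$ (since $\phi_{L_\Phi}(t)\to0$) and that $\|\chi_{[0,1]}\|_{L_\Phi}=1$, both of which the paper leaves implicit.
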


    \begin{proof}
        Since $\phi_{L_\Phi}(t)=1/\Phi^{-1}(1/t)$, where $\Phi^{-1}$ is the inverse function of $\Phi$ (see, e.g.,
        \cite[Theorem II.9.5]{KR}), the assumption of the corollary can be rewritten as
$$
        \varlimsup_{u\to\infty}{\phi_{L_\Phi}(t)}t^{-1/2} = \infty.
        $$
        Hence, it suffices to apply Theorem~\ref{th_intro_phi}.
   \end{proof}

\begin{remark}
 \label{rem Orlicz}
     We give an example of Orlicz function that does not satisfy the conditions of neither
     Corollary~\ref{cor1a} nor Corollary~\ref{cor_orlicz}. 
     
Following \cite[pp.~235-238]{JMShT} (see also \cite{H_R-S}), set
$$
\psi(u):=\sum_{k=1}^\infty (1-\cos(2^{-k}u\pi)),\;\;u\in\mathbb{R},$$
and for $q\in\mathbb{R}$
 $$
 \Phi_q(t):=t^2\exp(q\psi(\ln t))\;\text{if}\;t>0\;\;\text{and}\;\;\Phi_q(0)=0.$$
Clearly, $ \Phi_q(t)\ge 0$, $ \Phi_q(1)=1$. It is also known \cite[Lemma 1.1]{H_R-S} that the condition
    $|q|<1/(3\pi)$ guarantees that the function $\Phi_q$ is increasing and convex.
    
    Thus, in this case $ \Phi_q$ is an Orlicz function on $[0,\infty)$. We also assume that $q<0$. Then
    $\Phi_q(t)\le t^2$ for all $t\ge 0$, and hence the condition of Corollary~\ref{cor_orlicz} fails. We prove a
    similar statement concerning condition \eqref{eq1} of Corollary~\ref{cor1a}.

Suppose the contrary, i.e., that for some $C>0$ and all $a\ge 1$, $u\ge 1$ the following inequality holds:
\begin{equation}
\label{aux est1}
\Phi_q(au)\le Ca^2\Phi_q(u).
\end{equation}

After simple transformations we arrive at the relation
$$
\exp(-q(\psi(\ln u)-\psi(\ln(au)))\le C,$$
or, equivalently,
\begin{equation}
\label{aux est}
\psi(s)\le C'+\psi(s+t),
\end{equation}
 where the constant $C'$ does not depend on $s,t>0$.  
 
 For a fixed $s>0$, set $t=2^{k_0}-s>0$, where $k_0\in\mathbb{N}$. Since $s+t=2^{k_0}$, we obtain
\begin{multline*}
    \psi(s+t)= \psi(2^{k_0})
    = \sum_{k=1}^{k_0-1} (1-\cos(2^{k_0-k}\pi))+\sum_{k=k_0}^\infty (1-\cos(2^{k_0-k}\pi)) = \\
    = 2\sum_{k=k_0}^\infty \sin^2(2^{k_0-k-1}\pi)
    \le 2\pi^2\sum_{k=k_0}^\infty 2^{2(k_0-k-1)} =\frac{2\pi^2}{3}.
\end{multline*}
Therefore, from \eqref{aux est} it follows that $\psi(s)\le C'+\frac{2\pi^2}{3}$ for all $s>0$, i.e., the function
    $\psi$ is bounded from above on $(0,\infty)$. However, as is easy to verify that $\psi(s_n)\ge n$, where $s_n=4^n+4^{n-1}+\dots +1$, $n=1,2,\dots$. Thus, inequality \eqref{aux est}, and     hence also \eqref{aux est1}, is false.
    \end{remark}    
    
  \begin{probl}
    Find necessary and sufficient conditions on an Orlicz function $\Phi$ that
      imply that the Orlicz space $L_\Phi$ has the property (IR).
  \end{probl}

\vskip0.5cm

\subsection{Rigidity under additional assumptions}

Recall (see~\S\ref{Prel_symm}) that for every s.s. $X$ the normalization condition $\|\chi_{[0,1]}\|_X=1$ is assumed. Hence, for $f\in L_\infty$, we have the inequality $\|f\|_X\le \|f\|_\infty$,  which will be used in the proof of the
next statement.

      \begin{proposition}
    \label{pr3}
    Let an s.s. $X$ on $[0,1]$ satisfy a lower $2$-estimate, $X'\in\bbK$, and let $\delta>0$ be given.
     Assume also that $f_1,\ldots,f_N$ are mean zero independent  functions from $X$ such that $\|f_k\|_X\ge 1$, $k=1,2,\dots,N$.

          Then for all $n<N$ the estimate
        $$
        d_n^\avg(\{f_1,\dots,f_N\},X)_2
        \ge B^{-1}(1-n/N)^{1/2}
        $$
          holds, where the constant $B$ depends only on $X$, $\delta$, and the quantity $R_{1-\delta}(f)$ defined by
          the relation
        $$
          R_{1-\delta}(f):=\sum_{k=1}^N m\{|f_k|>1-\delta\}.
        $$
       \end{proposition}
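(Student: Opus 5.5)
The plan is to repeat the duality argument from the proof of Theorem~\ref{th_intro_rigid}, i.e.\ to build a system conjugate to $\{f_k\}$ in $X'=X^*$ with an $\ell_2$ upper estimate and then apply Lemma~\ref{lem_width}. In Theorem~\ref{main1} the hypothesis $X\supset L_2$ (equivalently $X'\subset L_2$) was only needed to get the von Bahr--Esseen type inequality \eqref{Bahr-Ess} for \emph{all} mean zero independent $g_k$. Here I would replace it by choosing the dual functions more carefully, with supports of controlled total measure, so that \eqref{main ineq1} in $X'$ plus the upper $2$-estimate of $X'$ suffice.

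\textbf{Step 1 (localized norming functionals).} As in the proof of Theorem~\ref{th_intro_rigid}, the lower $2$-estimate gives separability, so $X^*=X'$. Put $A_k:=\{|f_k|>1-\delta\}$. Since $\|f_k\chi_{[0,1]\setminus A_k}\|_X\le\|f_k\chi_{[0,1]\setminus A_k}\|_\infty\le 1-\delta$, we get $\|f_k\chi_{A_k}\|_X\ge\delta$. Choose $g_k'\in X'$ with $\|g_k'\|_{X'}=1$, $\supp g_k'\subset A_k$ and $\int f_kg_k'\,dt\ge\delta$. Since $A_k$ is $\sigma(f_k)$-measurable, $\mathbb E_{\mathcal A_k}g_k'$ is still supported in $A_k$. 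Set $g_k:=\mathbb E_{\mathcal A_k}g_k'-\mathbb E g_k'$. Exactly as before, the $g_k$ are independent and mean zero, $\|g_k\|_{X'}\le 2$, $\int f_jg_k\,dt=0$ for $j\ne k$, and $\int f_kg_k\,dt=\int f_kg_k'\,dt=:\lambda_k\ge\delta$. So $\{g_k/\lambda_k\}$ is conjugate to $\{f_k\}$.

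\textbf{Step 2 ($\ell_2$ estimate).} I will show $\|\sum a_kg_k\|_{X'}\le C(X,R)(\sum a_k^2)^{1/2}$ with $R=R_{1-\delta}(f)$. By \eqref{main ineq1} for $X'\in\bbK$ it suffices to bound $\|\sum a_k\bar g_k\|_{Z^2_{X'}}$. Write $\bar g_k=\bar u_k-\bar c_k$, where $u_k=\mathbb E_{\mathcal A_k}g_k'$ and $c_k=\mathbb E g_k'$ is a constant, and use the triangle inequality in $Z^2_{X'}$.

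The constant part is handled first. Since $|c_k|\le\|g_k'\|_{L_1}\le\|g_k'\|_{X'}=1$, the function $\sum a_k\bar c_k$ is a disjoint sum of constants on unit intervals. Its $Z^2_{X'}$-norm is dominated by its $L_2[0,\infty)$-norm, up to a constant from the $[0,1]$-piece. Hence it is at most $C(\sum a_k^2)^{1/2}$.

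The main part is $\sum a_k\bar u_k$. Its terms are disjoint and supported on sets of total measure $\sum m(A_k)=R$. So its decreasing rearrangement lives on $[0,R]$, and we may assume $R\ge1$. The piece on $[1,R]$ in $L_2$ is at most $\|\cdot\|_{L_2}$ of a function whose rearrangement on $[0,R]$ can be compared with a dilate. Concretely, compress by the factor $1/R$, using the dilation operator on $X'$, whose norm is bounded in terms of $R$. This yields a disjoint sum on $[0,1]$ in $X'$. Then use the upper $2$-estimate of $X'$ (dual to the lower $2$-estimate of $X$, \cite[Proposition 1.f.5]{LT}) together with $\|u_k\|_{X'}\le1$. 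This gives a bound $C(X,R)(\sum a_k^2)^{1/2}$.

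\textbf{Step 3 (conclusion).} Dividing by $\lambda_k\ge\delta$, the conjugate system satisfies \eqref{dual_cond} with $B=C(X,R)/\delta$. Lemma~\ref{lem_width} then gives the claimed bound on $d_n^{\avg}(\{f_1,\dots,f_N\},X)_2$.

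The main obstacle is the bookkeeping in Step 2, i.e.\ comparing the $Z^2_{X'}$-norm of a disjoint sum supported on a set of measure $R$ with an $X'$-norm on $[0,1]$. If the dilation route is awkward in $Z^2_{X'}$, I would fall back on splitting the rearrangement at $t=1$. On $[0,1]$ the upper $2$-estimate applies directly. On $[1,R]$ the rearranged function is bounded by its value at $1$, which is at most $\|\cdot\|_{X'}/\phi_{X'}(1)$, so its $L_2[1,R]$-norm is at most $\sqrt R$ times that value.
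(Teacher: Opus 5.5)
Your proposal is correct and is essentially the paper's proof: the same norming functionals supported on $\{|f_k|>1-\delta\}$, the same conditional-expectation construction of the conjugate system, the same split into a part supported on $\{|f_k|>1-\delta\}$ plus a constant, and the same reduction via \eqref{main ineq1} in $X'$ to a $Z^2_{X'}$-estimate, handled by compressing by the factor $R$ and applying the upper $2$-estimate of $X'$. Your fallback handles the piece on $[1,R]$ differently: you bound the rearrangement there by its value at $1$, which is at most the $X'$-norm of the $[0,1]$-piece since $\phi_{X'}(1)=1$. This is a slightly cleaner alternative to the paper's dilation in $Z^2_{X'}$, and it improves the dependence on $R$ from $R$ to roughly $1+\sqrt R$.
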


    \begin{proof}
   Set $E_k:=\{|f_k|>1-\delta\}$.
   Letting $h_k:=f_k\chi_{E_k}$, for each $k=1,2,\dots$ we find a function
   $g_k'\in X'$ such that $\int_0^1 h_kg_k'\,dt=\|h_k\|_X$, $\supp g_k'\subset E_k$, and $\|g_k'\|_{X'}=1$.

   As before (see the proof of Theorem~\ref{th_intro_rigid}), set
 $$
    g_k'':=\mathbb{E}_{\mathcal{A}_k}g_k'-\mathbb{E}g_k',\;\;k=1,2,\dots,$$
    where $\mathbb{E}_{\mathcal{A}_k}$ is the conditional expectation operator with respect to the $\sigma$-algebra of
    subsets of $[0,1]$ generated by the function $f_k$. Then the functions $g_k''$, $k=1,2,\dots$, are independent,
    and moreover,
$$
\mathbb{E}g_k''=0,\;\|g_k''\|_{X'}\le 2,\;k=1,2,\dots,\;\text{and}\;\int_0^1 f_jg_k''\,dt=0,\;\text{if}\;j\ne k.$$
    Furthermore,
    $$
    \int_0^1 f_kg_k''\,dt=\int_0^1 \mathbb{E}_{\mathcal{A}_k}(f_kg_k')\,dt=\int_0^1f_kg_k'\,dt
    =\int_0^1 h_kg_k'\,dt=\|h_k\|_X,\;\;k=1,2,\dots$$
    Since $\|f_k\|_X=1$, it is easy to see that $\|h_k\|_X\ge\delta$, $k=1,\ldots,N$.

    If we now set $g_k:=g_k''/\|h_k\|_X$, the preceding equalities show that $\{g_k\}_{k=1}^\infty$ is a biorthogonal
    system to the sequence $\{f_k\}_{k=1}^\infty$. In view of Theorem \ref{main1}, it suffices to prove the inequality
    \begin{equation}
    \label{upp est}
    \Big\|\sum_{k=1}^N a_kg_k\Big\|_{X'}\le B \Big(\sum_{k=1}^N a_k^2\Big)^{1/2}.
    \end{equation}

 We represent $g_k=u_k+v_k$, where
 $u_k:=\mathbb{E}_{\mathcal{A}_k}g_k'/\|h_k\|_X$, $v_k:= -\mathbb{E}g_k'/\|h_k\|_X$, $k=1,2,\dots$.
 Note that $\supp u_k\subset E_k$,
$$
 \|u_k\|_{X'}=\|\mathbb{E}_{\mathcal{A}_k}g_k'\|_{X'}/\|h_k\|_X\le\|g_k'\|_{X'}/\|h_k\|_X
 \le \delta^{-1},
$$
and
 $$
 \|v_k\|_{L_\infty}\le |\mathbb{E}g_k'|/\|h_k\|_X\le\|g_k'\|_{X'}/\|h_k\|_X
\le \delta^{-1}.
$$
Since the space $X'$ has the Kruglov property, $g_k$ are mean zero and independent functions from $X'$, we have
    \begin{equation}
    \label{upp est 2}
    \Big\|\sum_{k=1}^N a_kg_k\Big\|_{X'}\le C(X) \Big\|\sum_{k=1}^N a_k\bar{g}_k\Big\|_{Z_{X'}^2},
    \end{equation}
    where $\bar{g}_k(t)=g_k(t-k+1)\chi_{[k-1,k]}(t),$ $k=1,2,\dots,N$, and the constant $C(X)$ depends only on $X$
    (see inequality \eqref{main ineq1} in Section~\ref{Prel4}).

        Assume that $R:= R_{1-\delta}(f)>1$ (if $R\le 1$, the argument simplifies).
As is well known \cite[\S~II.4.3, Corollary 1]{KPS}, the dilation operator
$\tilde{\sigma}_\tau x (t):=x(t/\tau)$ is bounded in any s.s. $Y$ on the semi-axis and
$\|\tilde{\sigma}_\tau\|_{Y\to Y}\le\max\{1,\tau\}$, $\tau>0$. Furthermore, since
$m(\supp\tilde{\sigma}_{R^{-1}}\bar{u}_k)\le R^{-1} m(E_k)$, we have
$$
  \sum_{k=1}^N m(\supp\tilde{\sigma}_{R^{-1}}\bar{u}_k)
  \le R^{-1}\sum_{k=1}^N m(E_k)\le 1.
$$
  Therefore, by the definition of the quasi-norm~\eqref{main ineq1dop}, taking into
  account that $\|v_k\|_{L_\infty}\le \delta^{-1}$, $k=1,\dots,N$, we obtain for any $a_k\in \mathbb{R}$
  \begin{multline}
    \label{finish}
    \Big\|\sum_{k=1}^N a_k\bar{g}_k\Big\|_{Z_{X'}^2} =
    \Big\|\tilde{\sigma}_{R}\Big(\sum_{k=1}^N a_k\tilde{\sigma}_{R^{-1}}\bar{g}_k\Big)\Big\|_{Z_{X'}^2}
    \le R \Big\|\sum_{k=1}^N a_k\tilde{\sigma}_{R^{-1}}\bar{g}_k\Big\|_{Z_{X'}^2} \le \\
      \le R \Big(\Big\|\sum_{k=1}^N a_k\tilde{\sigma}_{R^{-1}}\bar{u}_k\Big\|_{Z_{X'}^2}
      + \Big\|\sum_{k=1}^N a_k\tilde{\sigma}_{R^{-1}}\bar{v}_k\Big\|_{Z_{X'}^2} \Big)\le  \\
    \le R\Big(\Big\|\Big(\sum_{k=1}^N a_k\tilde{\sigma}_{R^{-1}}\bar{u}_k\Big)^*\Big\|_{X'}
        + \delta^{-1}\max_{k=1,\dots,N}|a_k|\cdot\|\chi_{[0,1]}\|_{X'}
        + \Big\|\sum_{k=1}^N a_k\bar{v}_k\Big\|_{L_2}\Big).
   \end{multline}
    Since by hypothesis $X$ satisfies a lower $2$-estimate, the space $X'$ admits an upper $2$-estimate with the same constant
    \cite[Proposition~1.f.5]{LT}. Observe also that the functions $\tilde{\sigma}_{R^{-1}}\bar{u}_k$,
    $k=1,2,\dots,N$, are pairwise disjoint, and hence the sum $\sum_{k=1}^N a_k\tilde{\sigma}_{R^{-1}}\bar{u}_k$ is
    equimeasurable with the sum $\sum_{k=1}^N a_kw_k$, where the functions $w_k\in X'$ are pairwise disjoint and
    equimeasurable with the functions $\tilde{\sigma}_{R^{-1}}\bar{u}_k$ for each $k$. Thus, by virtue of the
    inequality
    $$
    \|w_k\|_{X'}=\|(\tilde{\sigma}_{R^{-1}}\bar{u}_k)^*\|_{X'}\le
    \|\bar{u}_k^*\|_{X'}=\|{u}_k\|_{X'}\le \delta^{-1},
    $$
     we obtain
\begin{equation*}
  \Big\|\Big(\sum_{k=1}^N a_k\tilde{\sigma}_{R^{-1}}\bar{u}_k\Big)^*\Big\|_{X'}
= \Big\|\sum_{k=1}^N a_kw_k\Big\|_{X'}\le C_{X} \Big(\sum_{k=1}^N
    a_k^{2}\|w_k\|_{X'}^{2}\Big)^{1/{2}}\le \delta^{-1}C_{X}\Big(\sum_{k=1}^N a_k^{2}\Big)^{1/{2}},
    \end{equation*}
    where $C_{X}$ is the constant from the lower $2$-estimate of the space $X$.
 Moreover,
 $$
\Big\|\sum_{k=1}^N a_k\bar{v}_k\Big\|_{L_2}\le \delta^{-1}\Big\|\sum_{k=1}^N
a_k\chi_{(k-1,k]}\Big\|_{L_2}= \delta^{-1}\Big(\sum_{k=1}^N a_k^{2}\Big)^{1/{2}}.$$
 Thus, from~\eqref{finish} it follows that
    \begin{equation*}
        \Big\|\sum_{k=1}^N a_k\bar{g}_k\Big\|_{Z_{X'}^2} \le \delta^{-1}R(C_{X}+2)\Big(\sum_{k=1}^N
        a_k^{2}\Big)^{1/{2}}.
    \end{equation*}
    Finally, it remains to observe that this inequality and \eqref{upp est 2} imply estimate \eqref{upp est} with the constant $B=\delta^{-1}C(X)R_{1-\delta}(f)(C_{X}+2)$.
    \end{proof}

     \begin{cor}
    \label{support}
 Let an s.s. $X$ on $[0,1]$ satisfy a lower $2$-estimate and $X'\in\bbK$. Suppose that  $f_k$ are mean zero independent functions such that $\|f_k\|_X\ge 1$, $k=1,2,\dots$, and
$$
R_0(f):=\sum_{k=1}^\infty m(\supp f_k)<\infty.$$

        Then for any $N\in\mathbb{N}$ and $n<N$ we have
        $$
        d_n^\avg(\{f_1,\dots,f_N\},X)_2
        \ge B^{-1}(1-n/N)^{1/2},
        $$
        where the constant $B$ depends only on $X$ and the quantity $R_0(f)$.
    \end{cor}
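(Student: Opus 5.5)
This corollary should follow directly from Proposition~\ref{pr3}; the only task is to bound the quantity $R_{1-\delta}(f)$ in terms of $R_0(f)$ for a fixed choice of $\delta$. I will take $\delta=1/2$, although any fixed $\delta\in(0,1)$ would serve. Then $\{|f_k|>1-\delta\}\subset\supp f_k$ for every $k$, and therefore
$$
R_{1/2}(f)=\sum_{k=1}^N m\{|f_k|>1/2\}\le\sum_{k=1}^\infty m(\supp f_k)=R_0(f)<\infty.
$$

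Next I will check that Proposition~\ref{pr3} applies to the finite family $f_1,\dots,f_N$. The space $X$ satisfies a lower $2$-estimate and $X'\in\bbK$. The functions $f_k$ are mean zero, independent, and satisfy $\|f_k\|_X\ge1$. The proposition therefore gives
$$
d_n^\avg(\{f_1,\dots,f_N\},X)_2\ge B^{-1}(1-n/N)^{1/2},\qquad n<N,
$$
with $B=B(X,1/2,R_{1/2}(f))$.

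It remains to remove the dependence on $N$ and on the particular family, keeping only the dependence on $R_0(f)$. The explicit constant obtained at the end of the proof of Proposition~\ref{pr3} is $B=\delta^{-1}C(X)R_{1-\delta}(f)(C_X+2)$, where $R$ may be replaced by $\max\{R,1\}$ in the case $R\le1$. This expression is monotone in $R_{1-\delta}(f)$. So I can replace $R_{1/2}(f)$ by the upper bound $\max\{R_0(f),1\}$ and take
$$
B=2C(X)\max\{R_0(f),1\}(C_X+2),
$$
which depends only on $X$ and $R_0(f)$.

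There is a small point in the proof of Proposition~\ref{pr3}: the lower bound $\|h_k\|_X\ge\delta$ is derived there under the normalization $\|f_k\|_X=1$. When $\|f_k\|_X\ge1$, the same argument still gives $\|h_k\|_X\ge\|f_k\|_X-(1-\delta)\ge\delta$, because $\|f_k\chi_{E_k^c}\|_X\le\|f_k\chi_{E_k^c}\|_\infty\le1-\delta$. So no additional difficulty arises here. Apart from keeping track of this monotone dependence on $R$, the proof is essentially immediate.
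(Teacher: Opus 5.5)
Your argument is correct and matches the paper's intended route. The paper states this corollary without a separate proof, as an immediate consequence of Proposition~\ref{pr3}: for a fixed $\delta\in(0,1)$ one has $\{|f_k|>1-\delta\}\subset\supp f_k$, so $R_{1-\delta}(f)\le R_0(f)$, and the constant $B=\delta^{-1}C(X)\max\{R,1\}(C_X+2)$ is monotone in $R$. Your remark that $\|h_k\|_X\ge\|f_k\|_X-(1-\delta)\ge\delta$ still holds when only $\|f_k\|_X\ge1$ correctly covers the normalization $\|f_k\|_X=1$ that the paper's proof of Proposition~\ref{pr3} tacitly uses.
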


      \begin{cor}
    \label{Lor4}
  If $1\le q<2$, the assertion of Proposition~\ref{pr3} holds for the space $X=L_{2,q}$.
    \end{cor}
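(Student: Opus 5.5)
The plan is to check that $X=L_{2,q}$ with $1\le q<2$ satisfies the two structural hypotheses of Proposition~\ref{pr3}: $X$ satisfies a lower $2$-estimate and $X'\in\bbK$. Once both are verified, the conclusion of Proposition~\ref{pr3} applies word for word.

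\textbf{Lower $2$-estimate.} As recalled before Corollary~\ref{Lor1}, the space $L_{p,q}$ satisfies a lower $\max\{p,q\}$-estimate (see \cite{Creek}, \cite{D-01}). For $p=2$ and $q<2$ we have $\max\{2,q\}=2$, so $L_{2,q}$ satisfies a lower $2$-estimate. One can also see this directly for $q\le 2$. For disjoint $x_k$, the rearrangement of $\sum x_k$ dominates in the $L_{2,q}$ functional the $\ell_2$-sum of the individual norms, since $\|x\|_{2,q}^q=\int_0^1 x^*(t)^q\,d(t^{q/2})$ and the weight $t^{q/2}$ is concave. We will rely on the cited result rather than redo this.

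\textbf{Kruglov property of $X'$.} By the duality $(L_{p,q})'=L_{p',q'}$ recalled before Corollary~\ref{Lor1}, we have $X'=L_{2,q'}$ with $q'=q/(q-1)\in(2,\infty]$ (for $q=1$, $q'=\infty$). The fact used before Corollary~\ref{Lor1} gives $L_{p,r}\in\bbK$ for all $1<p<\infty$ and $1\le r\le\infty$, so $L_{2,q'}\in\bbK$. An alternative route is also available: $L_{2,q'}$ is a maximal s.s. containing $L_p$ for any $p>2$, hence it belongs to $\bbK$ by \cite[Theorem~I.2]{Brav}. The point of passing through the associated space is that $X'$ is automatically maximal, so the maximality hypothesis of that theorem is met even for $q'=\infty$.

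\textbf{Where the difficulty lies.} There is no real obstacle; the one point to be careful about is the endpoint $q=1$, where $X'=L_{2,\infty}$ is nonseparable. This does not matter, because Proposition~\ref{pr3} only needs $X'$ to be maximal and in $\bbK$. Separability of $X=L_{2,q}$ itself, needed to identify $X^*$ with $X'$ when producing the norming functionals $g_k'$, follows as in the proof of Theorem~\ref{th_intro_rigid} from the lower $2$-estimate, and also holds directly because $q<\infty$. With both hypotheses established, Proposition~\ref{pr3} yields
$$
d_n^\avg(\{f_1,\dots,f_N\},L_{2,q})_2\ge B^{-1}(1-n/N)^{1/2},
$$
with $B$ depending only on $q$, $\delta$ and $R_{1-\delta}(f)$.

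This shows that the obstruction noted in Problem~\ref{pr2}, namely $L_{2,q}\not\supset L_2$, is harmless once the mass $R_{1-\delta}(f)$ is bounded.
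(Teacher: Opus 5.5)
Your proposal is correct and follows the paper's own (implicit) argument. The paper states the corollary without a separate proof because, from the facts recalled before Corollary~\ref{Lor1}, $L_{2,q}$ with $q<2$ satisfies a lower $\max\{2,q\}=2$-estimate, and $(L_{2,q})'=L_{2,q'}\in\bbK$; these are exactly the two hypotheses you verify before invoking Proposition~\ref{pr3}. Your remarks on the separability of $L_{2,q}$ and on the endpoint $q=1$ are accurate but add nothing essential.
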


    \begin{proposition}
        Let $X$ be a quasi-Banach s.s.
        For any $\eta>0$ there exists $\gamma=\gamma(X,\eta)>0$ such that for any family of independent functions
        $\{f_k\}_{k=1}^N\subset X$ satisfying the conditions
        \begin{equation}
            \label{I_eta}
            m\{f_k\ge\eta\}\ge\eta
            \quad\text{and}\quad
            m\{f_k\le-\eta\}\ge\eta,
            \quad k=1,\ldots,N,
        \end{equation}
        the inequality
    $$
    d_{\gamma N}(\{f_1,\dots,f_N\},X)\ge \gamma
    $$
    holds.
    \end{proposition}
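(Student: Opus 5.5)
The plan is a counting argument. A good approximation in the quasi-norm forces the approximating vectors to reproduce, for most points $t$, the sign pattern of $\vec f(t)=(f_1(t),\ldots,f_N(t))$ on the coordinates where $|f_k(t)|\ge\eta$. An $n$-dimensional subspace of $\mathbb R^N$ realizes only $2^{o(1)\cdot N}$ sign patterns when $n/N$ is small. Independence together with \eqref{I_eta} makes the sign pattern of $\vec f(t)$ too spread out to be captured by so few patterns.

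\emph{Step 1: from the quasi-norm to measure.} Suppose that $\|f_k-g_k\|_X\le\gamma$ for all $k$, where $g_k=\sum_{j\le n}a_{k,j}h_j$, $\vec g(t)=A\vec h(t)\in V:=A\mathbb R^n$ and $n\le\gamma N$. Put $D_k:=\{|f_k-g_k|>\eta/2\}$. Then $(\eta/2)\chi_{D_k}\le|f_k-g_k|$, so $(\eta/2)\phi_X(m(D_k))\le\gamma$. Since $\phi_X(t)>0$ for every $t>0$, this gives $m(D_k)\le\theta(\gamma)$, where $\theta(\gamma):=\sup\{t\colon\phi_X(t)\le 2\gamma/\eta\}$ (with $\theta=0$ if this set is empty), and $\theta(\gamma)\to0$ as $\gamma\to0$. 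Summing over $k$ and applying Chebyshev's inequality, the set $G$ of points $t$ lying in at most $2\theta N$ of the sets $D_k$ has $m(G)\ge1/2$.

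\emph{Step 2: sign patterns.} Fix $t\in G$. For each $k\notin\{j\colon t\in D_j\}$: if $f_k(t)\ge\eta$ then $g_k(t)>\eta/2$, and if $f_k(t)\le-\eta$ then $g_k(t)<-\eta/2$. Call $k$ a conflict for a vector $w\in\{-,0,+\}^N$ at $t$ if $f_k(t)\ge\eta$ but $w_k\ne+$, or $f_k(t)\le-\eta$ but $w_k\ne-$. Then $w=\sign\vec g(t)$ has at most $2\theta N$ conflicts at $t$, and $w$ belongs to the set $S(V)$ of sign vectors of $V$. I will use the classical bound (Warren/Cover type) on the number of sign vectors of an $n$-dimensional subspace:
\[
|S(V)|\le (CN/n)^n\le 2^{h(\gamma)N},
\]
where $h(\gamma)\to0$ as $\gamma\to0$.

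\emph{Step 3: probability estimate and union bound.} Fix any $w\in\{-,0,+\}^N$. By \eqref{I_eta}, each $k$ is a conflict for $w$ with probability at least $\eta$: if $w_k\ne+$, the event $\{f_k\ge\eta\}$ already has measure $\ge\eta$, and if $w_k=+$, the event $\{f_k\le-\eta\}$ does. By independence of the $f_k$, the conflict indicators are independent Bernoulli variables with parameters $\ge\eta$. Chernoff's bound then gives
\[
m\{t\colon \text{at most }2\theta N\text{ conflicts for }w\}\le e^{-c\eta^2N}
\]
once $2\theta<\eta/2$. The union bound over $w\in S(V)$ yields
\[
m(G)\le 2^{h(\gamma)N}e^{-c\eta^2N}<1/2
\]
if $\gamma$ is small in terms of $\eta$ and $X$ and $N$ is large, contradicting Step 1. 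Bounded $N$ is handled separately: there it suffices that $d_{N-1}>0$, which follows from \eqref{I_eta} through the same conflict argument with $w$ ranging over the finitely many sign vectors, after shrinking $\gamma$.

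The main obstacle I expect is Step 2's counting bound. One has to make sure that zero coordinates of $\vec g(t)$ are handled by allowing $0$ in the patterns, and that the count of $\{-,0,+\}$-sign vectors of $V$ is $2^{o(1)N}$ for $n\le\gamma N$. I would cite the hyperplane-arrangement bound: the number of faces of a central arrangement of $N$ hyperplanes in $\mathbb R^n$ is at most $\sum_{i\le n}\binom{N}{i}2^i$. The rest, namely Chebyshev in quasi-normed symmetric spaces (which needs only monotonicity of the quasi-norm, not the triangle inequality) and Chernoff, is routine.
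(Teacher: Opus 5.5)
Your argument is correct, but it takes a different route from the paper.

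The paper's proof is a reduction in three steps:
\begin{enumerate}
\item It passes to the Ky Fan metric $\rho(f,g)=\inf\{\eta\ge0\colon m\{|f-g|>\eta\}\le\eta\}$ of convergence in measure, and observes that \eqref{I_eta} puts each $f_k$ at $\rho$-distance at least $\eta$ from every constant.
\item It quotes \cite[Corollary 3.3]{M24} as a black box. That result gives rigidity of independent functions in $L_0$, namely $d_{\beta N}(\{f_k\},\rho)\ge\beta$.
\item It uses that a symmetric quasi-norm dominates $\rho$: $\rho(f,g)\ge\beta$ forces $\|f-g\|_X\ge\delta(X,\beta)$.
\end{enumerate}

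You do the transfer first. Your Step~1 is exactly the justification of that domination, which the paper only asserts; it uses monotonicity of the quasi-norm and $\phi_X(t)>0$ for $t>0$. You then prove the $L_0$-rigidity directly, combining three ingredients: the count of $\{-,0,+\}$-sign vectors of an $n$-dimensional subspace of $\mathbb R^N$, Hoeffding's bound for the independent conflict indicators, and a union bound.

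What each route buys:
\begin{itemize}
\item The paper's route is two lines long. Through \cite{M24}, it also works under the more general hypothesis \eqref{rho_condition}.
\item Yours is self-contained, modulo the standard Warren/face-count bound. It also gives explicit dependence of $\gamma$ on $\eta$ and $X$: it suffices that $\gamma<\tfrac12\eta\,\phi_X(\eta/4)$ and $\gamma\ln(C/\gamma)<c\,\eta^2$.
\end{itemize}

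A minor point about bounded $N$. Mere positivity of $d_{N-1}$ for each family would not give a uniform $\gamma$, so that reduction is imprecise. But no conflict argument is needed there. Once $N_0=N_0(\eta)$ is fixed, also take $\gamma<1/N_0$. Then for $N<N_0$ you have $d_{\gamma N}=d_0=\max_k\|f_k\|_X\ge\eta\,\phi_X(\eta)$, directly from \eqref{I_eta}.
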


    In particular, families of identically distributed functions possess the rigidity property in every quasi-Banach
    s.s.

    \begin{proof}
        Consider the following metric on the space of measurable functions:
        $$
        \rho(f,g):=\inf\{\eta\ge 0\colon m\{|f-g|>\eta\}\le\eta\}.
        $$
        Hypothesis~\eqref{I_eta} guarantees that
        \begin{equation}
            \label{rho_condition}
            \inf_{c\in\mathbb R}\rho(f_k,c\cdot\chi_{[0,1]})\ge\eta,\quad k=1,\ldots,N.
        \end{equation}
        At the same time, by~\cite[Corollary 3.3]{M24}, there exists $\beta=\beta(\eta)>0$ such that for any family
        of independent functions $\{f_k\}_{k=1}^N$ satisfying condition~\eqref{rho_condition} the inequality
        $$
        d_{\beta N}(\{f_1,\ldots,f_N\},\rho) \ge \beta
        $$
        holds.

        Since $X$ is symmetric, there exists $\delta=\delta(X,\beta)>0$ such that the inequality
        $\rho(f,g)\ge\beta$ implies $\|f-g\|_X \ge \delta$. From this we obtain the required lower bound for the
        width in $X$ with $\gamma:=\min\{\beta,\delta\}$.
    \end{proof}

\end{document}